\def\Bka{{\it Biometrika}}
\def \R {\mathbb{R}}
\def \N {\mathbb{N}}
\def \cS {\mathcal S}
\theoremstyle{plain}
\newtheorem{theorem}{Theorem}
\newtheorem{corollary}[theorem]{Corollary}
\newcounter{definitionAlgo}
\theoremstyle{definition}
\newtheorem{example}{Example}
\newtheorem{assumption}{}
\newcounter{definitionCount}
\begin{document}
\LARGE
\vspace*{0.5cm}
\begin{center}
{\bfseries Nonparametric identification and maximum likelihood estimation for hidden Markov models\\*[0.4cm]}
\large
\textbf{Grigory Alexandrovich, Hajo Holzmann\footnote{Address for correspondence: Prof.~Dr.~Hajo Holzmann, 
Philipps-Universität Marburg,
Fachbereich Mathematik und Informatik, 
Hans-Meerweinstr. 
D-35032 Marburg, Germany
email: holzmann@mathematik.uni-marburg.de,
Fon: + 49 6421 2825454
} and Anna Leister}\\*[0.4cm]
{\sl Fakult\"at f\"ur Mathematik und Informatik, Philipps-Universit\"at Marburg, Germany}
\end{center}
\normalsize
\begin{abstract}
Nonparametric identification and maximum likelihood estimation for finite-state hidden Markov models are investigated. We obtain identification of the parameters as  well as the order of the Markov chain if the transition probability matrices have full-rank and are ergodic, and if
the state-dependent distributions are all distinct, but not necessarily linearly independent. 
Based on this identification result, we develop nonparametric maximum likelihood estimation theory. First, we show that the asymptotic contrast, the Kullback--Leibler divergence of the hidden Markov model, identifies the true parameter vector nonparametrically as well. Second, for classes of state-dependent densities which are arbitrary mixtures of a parametric family, we show consistency of the nonparametric maximum likelihood estimator. Here, identification of the mixing distributions need not be assumed. 
Numerical properties of the estimates as well as of nonparametric goodness of fit tests are investigated in a simulation study.  
\end{abstract}

{\sl Keywords:} hidden Markov models, latent state models, nonparametric identification, nonparametric maximum likelihood estimation 

\section{Introduction}
A discrete-time hidden Markov model consists of an observed process $(Y_t)_{t \in \N}$ as well as a latent, unobserved process $(X_t)_{t \in \N}$, such that 
the $Y_t$ are independent given the $X_t$, the conditional distribution of $Y_s$ given the $X_t$ depends on $X_s$ only and $X_t$ is a finite-state Markov chain. We assume that $X_t$ is time-homogeneous. 
The cardinality $K$ of the state space of $X_t$ is called the number of states. The conditional distributions of $Y_s$ given $X_s = k$ ($k=1, \ldots, K$) are called the state-dependent distributions, and  we assume that they are independent of $s$. The entries of the transition probability matrix are denoted by $\Gamma= (\alpha_{j,k})_{j,k=1, \ldots, K}$. Further, we assume that the $Y_t$ take values in any subset of Euclidean space $\cS \subset \R^q$, and denote the distribution functions of the state-dependent distributions by $F_k$ ($k=1, \ldots, K$). 


Parametric estimation theory for finite-state hidden Markov models is well-developed; see \citet{ler} for consistency and \citet{bickel} for asymptotic normality of the maximum likelihood estimator. 
%
%
In order to achieve greater flexibility and to avoid misspecification, nonparametric modelling and estimation of the component distributions have received recent interest, see \citet{HolzmannLeister}, \citet{gassiat} and \citet{Vernet2015}. 
However, the most basic question is whether such models are identifiable. We give an affirmative answer in great generality: if the transition probability matrix $\Gamma$ is ergodic and of full rank, and if the state-dependent distributions are all distinct, then the parameters, together with the number of states, are all identifiable. 
Our second main result states that the asymptotic contrast for maximum likelihood  estimation, the generalized Kullback--Leibler divergence of the hidden Markov model, uniquely identifies the true parameter nonparametrically. It is well known that the ordinary Kullback--Leibler divergence discriminates between any two probability distributions on the same measurable space without reference to a particular model. However, for hidden Markov models, for which the generalized Kullback--Leibler divergence is defined as a limit of normalized log-likelihoods, the contrast property had previously been deduced from mere parametric identification of mixtures of product distributions in \citet{ler}, and had thus been restricted to parametric settings. Our second result allows us to investigate consistency of the maximum likelihood estimator over nonparametric classes. As an important example, we consider general mixtures of a parametric family as model for the state-dependent distributions, and as a third main result obtain consistency of the mixture densities under suitable assumptions. Here, we do not assume that the mixing distributions themselves are identified, and thus allow, e.g., general mixtures of normals.     

Let us discuss how our identification results relate to previous ones in the literature. 
In a seminal paper, based on a result by \citet{kruskal} on the identification of factors in three-way tables, \citet{allman} showed generic identifiability of various latent-state models, including hidden Markov models with finite-valued observations. 
Strict point identification, up to label swapping, for general-valued hidden Markov models was recently discussed by \citet{gassiat} and \citet{gassiat2}. 
Using analytic arguments, \citet{gassiat2} showed that if $\Gamma$ has full rank, and if the state-dependent distributions are from a location family of an arbitrary density, then all parameters as well as the number of components are identified from the joint distribution of two observations. While certainly of interest, merely the assumption of equal scale in each component which is implied by the model may be too restrictive for most applications.  
For a given $K$, \citet{gassiat} show identification if $\Gamma$ has full-rank and if the state-dependent distributions are linearly independent. The result follows by combining arguments given in \citet{allman} for generic identification of hidden Markov models and of finite mixtures of product distributions.
While the assumption of  linearly independent state-dependent distributions is convenient in the proofs, it is not intuitive, and also difficult to interpret for nonparametric classes such as smooth classes of densities, or shape-constrained classes such as log-concave densities, where more than two distinct distributions may well be linearly dependent. 
Our result for distinct state-dependent distributions is better suited for such nonparametric classes. In its proof, the main challenge is to find a substitute for the linear independence of the state-dependent distributions. 


\section{Nonparametric identification}\label{sec:results}

Our basic assumptions are as follows. 
\begin{assumption}\label{eq:tpm}
The transition probability matrix $\Gamma= (\alpha_{j,k})_{j,k=1, \ldots, K}$ of $(X_t)$ has full rank and is ergodic.%
\end{assumption}
\begin{assumption}\label{eq:statedep}
The state-dependent distributions $F_k$ ($k=1, \ldots, K$) are all distinct.
\end{assumption}
Let us first consider the stationary case for a fixed number of components. 
\begin{assumption}\label{eq:stationary}
The Markov chain $(X_t)$ is stationary with starting distribution $\pi$, the stationary distribution of $\Gamma$. 
\end{assumption}
\begin{theorem}\label{cor:identagainstgeneralparstat}
For given $K$, let $\Gamma$, $F_1, \ldots, F_K$ and $\tilde \Gamma$, $\tilde F_1, \ldots, \tilde F_K$ be two sets of parameters for a $K$-state hidden Markov model, such that the joint distributions of $\big(Y_1, \ldots, Y_{2 K +1}\big)$ under both sets of parameters are equal. Further, suppose that $\Gamma$ and $F_1, \ldots, F_K$ satisfy Assumptions \ref{eq:tpm}--\ref{eq:stationary}. Then both sets of parameters coincide up to label swapping. 
\end{theorem}

In Theorem \ref{cor:identagainstgeneralparstat}, Assumptions \ref{eq:tpm} and \ref{eq:statedep} solely concern $\Gamma$, $F_1, \ldots, F_K$; nothing is assumed for $\tilde \Gamma$, $\tilde F_1, \ldots, \tilde F_K$. For statistical inference, this implies that Assumptions \ref{eq:tpm}--\ref{eq:stationary} are required for the true model, but estimators need not be restricted to satisfy these constraints. 
\begin{example}
To show the necessity of the full-rank assumption of the transition probability matrix, we construct for each $K \geq 1$ a $(K+1)$-state matrix of rank $K$ and two sets of $K+1$ distributions, which are even linearly independent, such that the observations in a resulting $(K+1)$-state hidden Markov model have the same distribution. To this end, let $\Gamma= (\alpha_{j,k})_{j,k=1, \ldots, K}$ be a $K$-state ergodic transition probability matrix of full rank. Let $\delta, \beta \in (0,1)$ with $\delta \not= \beta$, set $ p = \beta /(1+\beta - \delta)$ for which $p \in (0,1)$, and consider the $(K+1)$-state matrix of rank $K$
\[ \Gamma_1 = \begin{pmatrix} \alpha_{1,1} & \cdots & \alpha_{1,K-1} & p\,\alpha_{1,K} & (1-p)\,\alpha_{1,K}\\ 
\vdots &  & \vdots &  & \vdots\\
\alpha_{K-1,1} & \cdots & \alpha_{K-1,K-1} & p\,\alpha_{K-1,K} & (1-p)\,\alpha_{K-1,K}\\
\alpha_{K,1} & \cdots & \alpha_{K,K-1} & p\,\alpha_{K,K} & (1-p)\,\alpha_{K,K}\\
\alpha_{K,1} & \cdots & \alpha_{K,K-1} & p\,\alpha_{K,K} & (1-p)\,\alpha_{K,K}
\end{pmatrix}.\]
Let $F_1, \ldots, F_{K+1}$ be linearly independent distribution functions, for example, normal distributions with distinct parameters. As the second set $\tilde F_1, \ldots, \tilde F_{K+1}$ of distribution functions let $ \tilde F_1  = F_1, \ldots, \tilde F_{K-1} = F_{K-1}$ and 
\begin{align*}
\tilde F_{K} & = \delta \, F_K + (1-\delta)\, F_{K+1}, \quad \tilde F_{K+1} = \beta \, F_K + (1-\beta)\, F_{K+1}.
\end{align*}
Then
$ p \, \tilde F_{K} +(1-p) \, \tilde F_{K+1} =  p \,  F_{K} +(1-p) \, F_{K+1}$, 
and from \citet{HolzmannSchwaiger2013}, the distributions of the observations of a $(K+1)$-state hidden Markov model with transition probability matrix $\Gamma_1$, stationary starting distribution and either set of state-dependent distributions are equal to that of a $K$-state stationary hidden Markov model with transition probability matrix $\Gamma$ and state dependent distributions $F_1, \ldots, F_{K-1}$ and $p \,  F_{K} +(1-p) \, F_{K+1}$.  
\end{example}
\begin{example}
One may wonder whether the assumption of distinct state-dependent distributions is actually necessary for identification, or whether states may possibly be reconstructed merely from transitions if there are sufficiently many different state-dependent distributions, without all of them being distinct. 
In this example we describe a class of hidden Markov models where this is not possible. A stationary Markov chain $(X_t)_{t \in \N}$ with transition probability matrix $\Gamma$ is called lumpable with respect to a partition $\{ G_1, \ldots, G_m\}$ of the state-space $\{1, \ldots, K\}$ if the process $(\tilde X_t)_{t \in \N}$ defined by $\tilde X_t = j$ if $X_t \in G_j$ ($j=1, \ldots, m$) is also a Markov chain. \citet{Kemeny} show that this is equivalent to $\text{pr}\,(X_{t+1} \in G_j \mid X_t \in G_i) = \text{pr}\,(X_{t+1} \in G_j \mid X_t =k)$ ($i,j=1, \ldots, m$; $k \in G_i$). If this is the case in a hidden Markov model $(Y_t, X_t)_{t \in \N}$, for which the state-dependent distributions are equal over the states in the elements $G_j$ ($j=1, \ldots, m$) of the partition, then its distribution reduces to that of a $m$-state hidden Markov model. In particular, the full transition probability matrix $\Gamma$ of the $K$-state representation cannot be identified. 
\end{example}
\begin{example}
Hidden Markov models with state-dependent densities which mainly differ in terms of their scale are used for modelling financial time series. The distinct scales correspond to volatility states of the market;  
see \citet{HolzmannSchwaiger2013} and references therein. 
In case of three states, there is a transition regime between highest and lowest volatility. Hence it is plausible that the state-dependent density with intermediate scale may actually be a mixture of the two densities with lowest and highest volatility, thus making the three state-dependent densities linearly dependent. See the supplementary material for simulations in such a scenario. 
\end{example}

Now let us turn to the case of a general starting distribution. While only of moderate statistical interest by itself, identification of the initial distribution is an essential tool for proving the nonparametric contrast property of the Kullback--Leibler divergence of a hidden Markov model in Section \ref{sec:mlcontrast}. The choice of $T$ in the following theorem is due to the fact that for $t_0 = K^2-2K+2$, $\Gamma^{t_0}$ has strictly positive entries \citep{holladay}.
\begin{theorem}\label{the:identgeneral}
For a known number of states $K$, let $\lambda, \Gamma$, $F_1, \ldots, F_K$ and $\tilde \lambda, \tilde \Gamma$, $\tilde F_1, \ldots, \tilde F_K$ be two sets of parameters for a $K$-state hidden Markov model, where $\lambda$ and $\tilde \lambda$ denote the initial distributions of the Markov chain. Suppose that the joint distributions of $ \big(Y_1, \ldots, Y_{T}\big)$
with $T = (2 K+1) (K^2-2K+2) + 1$, are equal under both sets of parameters. Further, suppose that $\Gamma$ and $F_1, \ldots, F_K$ satisfy Assumptions \ref{eq:tpm} and \ref{eq:statedep}. Then both sets of parameters coincide up to label swapping. 
\end{theorem}

Finally, let us turn to the additional identification of the number of states. 
For $L<K$ we may interpret an $L$-state as a $K$-state hidden Markov model, where $K-L$ states are never visited by the underlying Markov chain. From Theorem \ref{the:identgeneral}, we therefore get the following corollary.  
\begin{corollary}\label{the:identorder}
Let $\lambda, \Gamma$ and $F_1, \ldots, F_K$  and $\bar \lambda, \bar \Gamma$ and $\bar F_1, \ldots, \bar F_L$ be two sets of parameters for a $K$-state and a $L$-state hidden Markov model, where $L \leq K$. Assume that $\Gamma$ is ergodic and of full rank, and that $F_1, \ldots, F_K$ are all distinct. If the joint distributions of $ \big(Y_1, \ldots, Y_{T}\big)$, $T = (2 K+1) (K^2-2K+2) + 1$, are the same under the both sets of parameters, then $K=L$ and the sets of parameters are equal up to a label swapping.
\end{corollary}

In summary, we get the following identification result for the number of states and the parameters. 

\begin{corollary}
For a hidden Markov model, within the class of parameters satisfying Assumptions \ref{eq:tpm} and \ref{eq:statedep}, both the number of states and the parameters are identified from the distribution of the observed process $(Y_t)_{t \in \N}$. 
\end{corollary}
Indeed, if we compare two hidden Markov models with $L$ and $K$ states satisfying Assumptions \ref{eq:tpm} and \ref{eq:statedep} and having equal distributions of the observations, then Theorem \ref{the:identgeneral} takes care of the case $L=K$ while Corollary \ref{the:identorder} shows that the case $L \not= K$ cannot occur. 

\section{Nonparametric maximum likelihood estimation}\label{sec:nonparmle}
\subsection{The Kullback--Leibler divergence of a hidden Markov model}\label{sec:mlcontrast}
Let $\mathcal D$ be a class of densities on $\cS$ with respect to some $\sigma$-finite measure $\nu$. 
Suppose that $(Y_t, X_t)_{t \in \N}$ is a $K$-state hidden Markov model with transition probability matrix $\Gamma_0$ satisfying Assumptions \ref{eq:tpm} and \ref{eq:stationary} and having stationary distribution $\pi_0$, and that the state-dependent distributions $F_{1,0}, \ldots, F_{K,0}$ are all distinct and have densities $f_{1,0}, \ldots ,f_{K,0}$  from the class $\mathcal D$. In the following, we write $Y_s^t = (Y_s, \ldots, Y_t)$ and $y_s^t = (y_s, \ldots, y_t)$ ($1 \leq s < t < \infty$).

For parameters $\lambda$, $\Gamma$, $f_1, \ldots, f_K$, $n \in \N$ and $y_1^n \in \cS^n$ consider 
\[ g_n\big(y_1^n; \lambda, \Gamma, f_1, \ldots , f_K \big) = \sum \limits_{x_1 = 1}^{K} \cdots \sum \limits_{x_n = 1}^{K}  \lambda_{x_1} f_{x_1}(y_1) \prod \limits_{i=2}^n \alpha_{x_{i-1}, \; x_i} f_{x_i}(y_i),\]
the joint density of $n$ observations under these parameters, and denote the log-likelihood function of $Y_1, \ldots, Y_n$ by
\[ L_{n}\big(\lambda, \Gamma, f_1, \ldots, f_K \big) = \log g_n\big(Y_{1}^{n}; \lambda, \Gamma, f_1, \ldots, f_K \big).\]
\begin{assumption}\label{eq:intble} 
The true densities $f_{j,0} \in \mathcal{D}$ satisfy $E \{|\log f_{j,0}(Y_1)|\} < \infty$,\quad  ($j=1,\hdots,K$). 
\end{assumption}

\begin{assumption}\label{eq:intpospart} 
The model satisfies $E \{\log f (Y_1) \}^+ < \infty$, \quad ($f \in \mathcal{D}$). 
\end{assumption}

\begin{theorem}\label{the:mlcontrast}
Suppose that $(Y_t, X_t)_{t \in \N}$ is a $K$-state hidden Markov model with transition probability matrix $\Gamma_0$ satisfying Assumptions \ref{eq:tpm} and \ref{eq:stationary}, and that the state-dependent distributions $F_{1,0}, \ldots, F_{K,0}$ are all distinct and have densities $f_{1,0}, \ldots ,f_{K,0}$  from the class $\mathcal D$, and satisfy Assumption \ref{eq:intble}.  Let $\lambda, \lambda_0$ be $K$-state probability vectors with strictly positive entries. 
Under Assumption \ref{eq:intpospart}, given $f_1, \ldots, f_K \in \mathcal D$ we have almost surely as $n \to \infty$ that
\begin{align}
\begin{split}\label{eq:kullbackhmm}
 n^{-1}\, \big\{L_{n}\big(\lambda, \Gamma, f_1, \ldots, f_K \big) & - L_{n}\big(\lambda_0,\Gamma_0, f_{1,0}, \ldots, f_{K,0}  \big) \big\}\\
	& \to - K\{(\Gamma_0, f_{1,0}, \ldots, f_{K,0}),(\Gamma, f_1, \ldots, f_K)\} \in (- \infty, 0],
\end{split}
\end{align}
and $K\{(\Gamma_0, f_{1,0}, \ldots, f_{K,0}),(\Gamma, f_1, \ldots, f_K) \} = 0$ if and only if the two sets of parameters are equal up to label swapping. 
\end{theorem}

The limit in (\ref{eq:kullbackhmm}) defines the Kullback--Leibler divergence of the hidden Markov model.  
As could be expected, it does not identify the initial distribution, and arbitrary probability vectors with positive entries, not necessarily the stationary distribution of $\Gamma$, can be used in the likelihood function. 
It is well known that the ordinary Kullback--Leibler divergence discriminates between any two probability distributions on the same measurable space without reference to a particular model. For hidden Markov models, \citet{ler} showed that the limit in (\ref{eq:kullbackhmm}) may be represented as an integral over the ordinary Kullback--Leibler divergence of finite segments of hidden Markov models, where integration is with respect to the initial distributions. From this and parametric identification of finite mixtures of product distributions, he deduced the contrast property, that is $K\{(\Gamma_0, f_{1,0}, \ldots, f_{K,0}),(\Gamma, f_1, \ldots, f_K)\} \in [0,  \infty)$ and $K\{(\Gamma_0, f_{1,0}, \ldots, f_{K,0}),(\Gamma, f_1, \ldots, f_K) \} = 0$ if and only if the two sets of parameters are equal up to label swapping, within a parametric class. However, Theorem \ref{the:identgeneral} implies that it holds without reference to a parametric family. 

%
%
%
%

\subsection{Nonparametric maximum likelihood estimation for state-dependent mixtures}\label{sec:nonparmlmix}

In this subsection we use arbitrary mixtures of some parametric family of state-dependent densities to illustrate how the above results can be employed for nonparametric maximum likelihood estimation in hidden Markov models. 
Suppose that $(f_{\vartheta})_{\vartheta\in \Theta}$ is a parametric family of densities on $\cS$ with respect to some $\sigma$-finite measure, and that $\Theta \subset \R^d$ is compact. Let $\tilde{\Theta}$ be the set of Borel probability measures on $\Theta$. Endowed with the weak topology $\tilde{\Theta}$ is also a compact set. Assume that the map $(y,\vartheta) \mapsto f_{\vartheta}(y)$ is continuous on $\cS\times \Theta$. Given $\mu \in \tilde{\Theta}$, we let 
\[ f_\mu(y) = \int_\Theta f_{\vartheta}(y)\, d \mu(\vartheta)\]
denote the corresponding mixture density. We shall call $\mu$ the mixing distribution for $f_\mu$, and take $\mathcal D = \{f_\mu:\, \mu \in \tilde \Theta\}$ as model for the state-dependent densities.  

For independent identically distributed observations there is some literature on nonparametric estimation of $\mu$ or $f_\mu$. \citet{lindsay} shows that there exists a nonparametric maximum likelihood estimator for $\mu$ with finite support, the number of support points being at most equal to the sample size. \citet{Leroux1992b} obtains its consistency under the assumption that the mixing distribution $\mu$ is identified from $f_\mu$. While convergence of estimators of $\mu$ may be quite slow \citep{ryden2005}, $f_\mu$ is estimated at optimal near-parametric rates for normal mixtures \citep{kim14, ghosal2001}.
We shall focus on consistency and in contrast to \citet{Leroux1992b} do not assume that 
the mixing distribution $\mu$ is identified from the mixture density $f_\mu$, since our interest is in the estimation of $f_\mu$ rather than of $\mu$. Thus we allow, e.g., arbitrary mixtures of normal densities in both mean and variance, for which the mixing distribution is not identified \citep{teicher}.  

Let 
$ \theta = \big(\Gamma, \mu_1, \ldots, \mu_K) \in G \times \tilde{\Theta} \times  \cdots \times \tilde{\Theta}$, 
where $G$ is the compact set of $K$-state transition probability matrices. Given the sequence of observations $Y_1, \ldots, Y_n$, the log-likelihood function is 
\[ L_n(\theta) = \log\, \big\{ \sum \limits_{x_1 = 1}^{K} \cdots \sum \limits_{x_n = 1}^{K}  \lambda_{x_1} f_{\mu_{x_1}}(Y_1) \prod \limits_{i=2}^n \alpha_{x_{i-1}, \; x_i} f_{\mu_{x_i}}(Y_i)\, \big\},\]
where $\lambda$ is an arbitrary $K$-state strictly positive probability vector. First, we show the existence of a maximum likelihood estimator for which the state-dependent mixing distributions $\mu_k$ ($k=1, \ldots, K$) have finite support. 
\begin{theorem}\label{prop:existencemle}
Let $(f_{\vartheta})_{\vartheta\in \Theta}$ be a parametric family of densities with $\Theta \subset \R^d$ compact, and let $\mathcal D = \{f_\mu:\, \mu \in \tilde \Theta\}$ be the model for the state-dependent densities, where $\tilde{\Theta}$ is the set of Borel probability measures on $\Theta$.
Then for any $n \geq 1$ there exists a maximum likelihood estimator $\hat \theta_n = (\hat \Gamma_n,\hat \mu_{1,n},\ldots,\hat \mu_{K,n})$, for which the state-dependent mixing distributions are of the form
\[ \hat \mu_{k,n} = \sum_{j=1}^m a_j\, \delta_{\vartheta_{j,k}} \quad (k=1, \ldots, K),\]
where $m\in \{1,\hdots,Kn+1\}$, $a_j>0$, $\sum_{j=1}^m a_j=1$, 
$\vartheta_{j,k}\in \Theta$ ($j=1, \ldots, m$), and where $\delta_\vartheta$ is the point-mass at $\vartheta$. 
\end{theorem}
This is similar to \citet{lindsay}'s existence result, although more components are required due to the distinct states and the non-convexity of the likelihood of the hidden Markov model. 

Let us turn to consistency. Assume that the true state-dependent densities $f_{k,0} = f_{\mu_{k,0}}$ belong to the model and are all distinct, and that $\Gamma_0$ satisfies Assumption \ref{eq:tpm}. 
\begin{assumption} \label{ass_finiteexp}
For every $\mu \in \tilde{\Theta}$ and a small enough neighborhood $O_{\mu}$ of $\mu$ we have
\begin{equation*}
E \big[\sup_{\tilde \mu\in O_\mu}\{\log f_{\tilde \mu}(Y_1)\}^{+}\big] < \infty.
\end{equation*}
\end{assumption}
%
%
\begin{theorem}\label{th:consistency}
Let $(f_{\vartheta})_{\vartheta\in \Theta}$ be a parametric family of densities with $\Theta \subset \R^d$ compact, and let $\mathcal D = \{f_\mu:\, \mu \in \tilde \Theta\}$ be the model for the state-dependent densities, where $\tilde{\Theta}$ is the set of Borel probability measures on $\Theta$.
Suppose that Assumptions \ref{eq:tpm}, \ref{eq:stationary}, \ref{eq:intble} and \ref{ass_finiteexp} hold, and let  $\hat \theta_n = (\hat \Gamma_n,\hat \mu_{1,n},\ldots,\hat \mu_{K,n})$ denote a maximum likelihood estimator. Then, after relabeling, we have in probability as $n \to \infty$ that $\hat \Gamma_n \to \Gamma_0$  and 
\[ f_{\hat \mu_{k,n}}(y) \to f_{k,0}(y) \quad (y \in \cS, \  k=1,\hdots, K).\]
Furthermore, if the mixing distribution $\mu$ is identified from the mixture density $f_\mu$, then we additionally have that $d_w\big(\hat \mu_{k,n},\mu_{k,0}\big) \to 0$ in probability, where $d_w$ is a distance which metrizes weak convergence in $\tilde \Theta$.  
\end{theorem}
\section{Simulations}\label{sec:sims}
\subsection{Nonparametric maximum likelihood estimation}\label{sec:simestimaton}
In this section we investigate the performance of the nonparametric maximum likelihood estimator based on state-dependent %
mixtures in a simulation study, and discuss its applications to goodness of fit assessment of parametric models. 

Consider a three-state hidden Markov model, in which the state-dependent densities are mixtures of univariate Gaussian distributions, specified as follows.
Let $g_{\beta(a,b)}(x)= \{\Gamma(a+b)\}/\{\Gamma(a)\Gamma(b)\}x^{a-1}(1-x)^{b-1}\mathds{1}_{(0,1)}(x)$ denote the density of the Beta distribution, $g_{\beta(a,b)}(x; l, s) = g_{\beta(a,b)}\{(x-l)/s\}/s$ the Beta density translated by $l$ and scaled by $s$ and let $\phi_{\mu,\sigma}$ denote the Gaussian density with parameters $\mu$ and $\sigma$. 
The state-dependent density of the first state is taken as $f_{1,0}(y)=0$$\cdot$$33 \phi_{-10,2}(y)+ 0$$\cdot$$33 \phi_{-7\cdot5,2}(y) + 0$$\cdot$$34\phi_{-4,2}(y)$. The densities of the second and third states, $f_{2,0}(y)$ and $f_{3,0}(y)$, are general mixtures of univariate Gaussian densities. For $f_{2,0}(y)$ we let $\mu$ follow the Beta distribution $g_{\beta(2,2)}(\mu;0,1)$ and let $\sigma$ be uniformly distributed on the interval $(1,4)$, for $f_{3,0}(y)$ we let $\mu$ follow $g_{\beta(2,11)}(\mu;5,33)$ and let $\sigma$ be uniformly distributed on $(1$$\cdot$$4,1$$\cdot$$6)$. 
For the transition probability matrix we choose
\begin{equation*}
\Gamma=
\begin{pmatrix}
\text{0$\cdot$5} & \text{0$\cdot$25} & \text{0$\cdot$25}\\
\text{0$\cdot$4} & \text{0$\cdot$4} & \text{0$\cdot$2}\\
\text{0$\cdot$2} & \text{0$\cdot$2} & \text{0$\cdot$6}
\end{pmatrix}.
\end{equation*}
Computing the nonparametric maximum likelihood estimator of a mixing distribution and the resulting mixture is not an easy task, see, e.g.,~\citet{laird1978}. We successively compute the maximum likelihood estimator for a given number of mixture components in each state using the expectation-maximization algorithm as described in \citet{Volant2013}, and increase the number of components as long as the resulting likelihood increases.  

In our simulations, we use series of lengths  $n=1000$ from the above model. We also consider the maximum likelihood estimators in two misspecified parametric hidden Markov models with simple Gaussian and two-component mixtures of Gaussian distributions, respectively.  
The nonparametric maximum likelihood estimators are denoted by $f_{\hat{\mu}_{k,n}}$, the simple Gaussian estimators by $f_{\tilde{\mu}_{k,n}}$ and the two-component Gaussian mixture estimators by $f_{\bar{\mu}_{k,n}}$ $(k=1,2,3)$.
On a computer with 3.07 GHz and 24GB RAM, computing the simple Gaussian estimators once requires $2.3$ seconds, the two-component Gaussian mixture estimator requires $8.7$ seconds and the nonparametric maximum likelihood estimators requires $84.5$ seconds. 

To illustrate the consistency of $f_{\hat{\mu}_{k,n}}$ as stated in Theorem \ref{th:consistency}, we evaluate the relative errors over 10000 simulations for the points indicated in Fig.~\ref{fig:estimates} and listed in Table \ref{tab:relerr}. The results together with those for the misspecified parametric estimators are given in Table \ref{tab:relerr}. The relative errors for $f_{\tilde{\mu}_{k,n}}$ and  $f_{\bar{\mu}_{k,n}}$ are higher at most points than those for $f_{\hat{\mu}_{k,n}}$, in particular for states $1$ and $3$, which reflects the bias of these estimators due to misspecification. The estimators for the transition probability matrices perform rather similarly for the three methods, therefore we do not report the results.
Additional simulation results for series of lengths different from 1000, which illustrate the consistency of the nonparametric maximum likelihood estimator and its performance for shorter series, are provided in the supplementary material. 
\begin{table}[t]
\centering
\begin{tabular}{rrrrrrrrrr}
 $y$& $-$15$\cdot$45 & $-$13$\cdot$77 & $-$11$\cdot$22 & $-$9$\cdot$05 & $-$7$\cdot$26 & $-$5$\cdot$3 & $-$2$\cdot$86 & $-$0$\cdot$21 & 1$\cdot$56 \\ 
nonpar & 109$\cdot$79 & 28$\cdot$00 & 6$\cdot$92 & 12$\cdot$94 & 23$\cdot$93 & 5$\cdot$09 & 43$\cdot$82 & 46$\cdot$40 & 43$\cdot$87 \\ 
  2-comp & 117$\cdot$75 & 28$\cdot$61 & 6$\cdot$26 & 12$\cdot$46 & 25$\cdot$18 & 4$\cdot$94 & 45$\cdot$04 & 48$\cdot$01 & 49$\cdot$49 \\ 
 Gauss & 136$\cdot$66 & 31$\cdot$14 & 5$\cdot$84 & 10$\cdot$68 & 24$\cdot$37 & 4$\cdot$68 & 43$\cdot$15 & 52$\cdot$93 & 37$\cdot$43 \\
   \\
  $y$  & $-$9$\cdot$36 & $-$6$\cdot$36 & $-$2$\cdot$71 & $-$0$\cdot$68 & 0$\cdot$5 & 1$\cdot$67 & 3$\cdot$71 & 7$\cdot$36 & 10$\cdot$36 \\ 
nonpar & 65$\cdot$27 & 22$\cdot$20 & 64$\cdot$95 & 9$\cdot$77 & 13$\cdot$44 & 19$\cdot$36 & 25$\cdot$00 & 59$\cdot$64 & 67$\cdot$53 \\ 
 2-comp & 69$\cdot$44 & 22$\cdot$63 & 68$\cdot$76 & 10$\cdot$60 & 13$\cdot$88 & 19$\cdot$48 & 25$\cdot$12 & 61$\cdot$55 & 67$\cdot$06 \\ 
 Gauss & 79$\cdot$61 & 16$\cdot$69 & 74$\cdot$73 & 9$\cdot$60 & 15$\cdot$02 & 19$\cdot$97 & 25$\cdot$32 & 81$\cdot$74 & 98$\cdot$08 \\ 
\\
 $y$   & 2$\cdot$27 & 3$\cdot$74 & 6 & 7$\cdot$99 & 9$\cdot$66 & 11$\cdot$61 & 14$\cdot$93 & 20$\cdot$17 & 22 \\ 
nonpar & 1090$\cdot$32 & 166$\cdot$99 & 9$\cdot$90 & 20$\cdot$26 & 13$\cdot$87 & 6$\cdot$38 & 7$\cdot$04 & 33$\cdot$61 & 48$\cdot$31 \\ 
2-comp & 1103$\cdot$22 & 175$\cdot$93 & 8$\cdot$29 & 22$\cdot$56 & 15$\cdot$08 & 5$\cdot$95 & 6$\cdot$81 & 37$\cdot$69 & 50$\cdot$26 \\ 
Gauss & 1236$\cdot$47 & 202$\cdot$98 & 4$\cdot$79 & 24$\cdot$17 & 18$\cdot$80 & 6$\cdot$69 & 3$\cdot$24 & 34$\cdot$78 & 52$\cdot$51 
\end{tabular}
\caption{Relative errors ($\times100$) of the three estimators compared to the true densities at selected values for $y$ averaged over 10000 replications. `Gauss' stands for Gaussian state-dependent distributions, `2-comp' for two component Gaussian mixtures and `nonpar' for nonparametric Gaussian mixtures. }
\label{tab:relerr}
\end{table}

\begin{figure}[ht]
\centering
\includegraphics[width=0.3\textwidth]{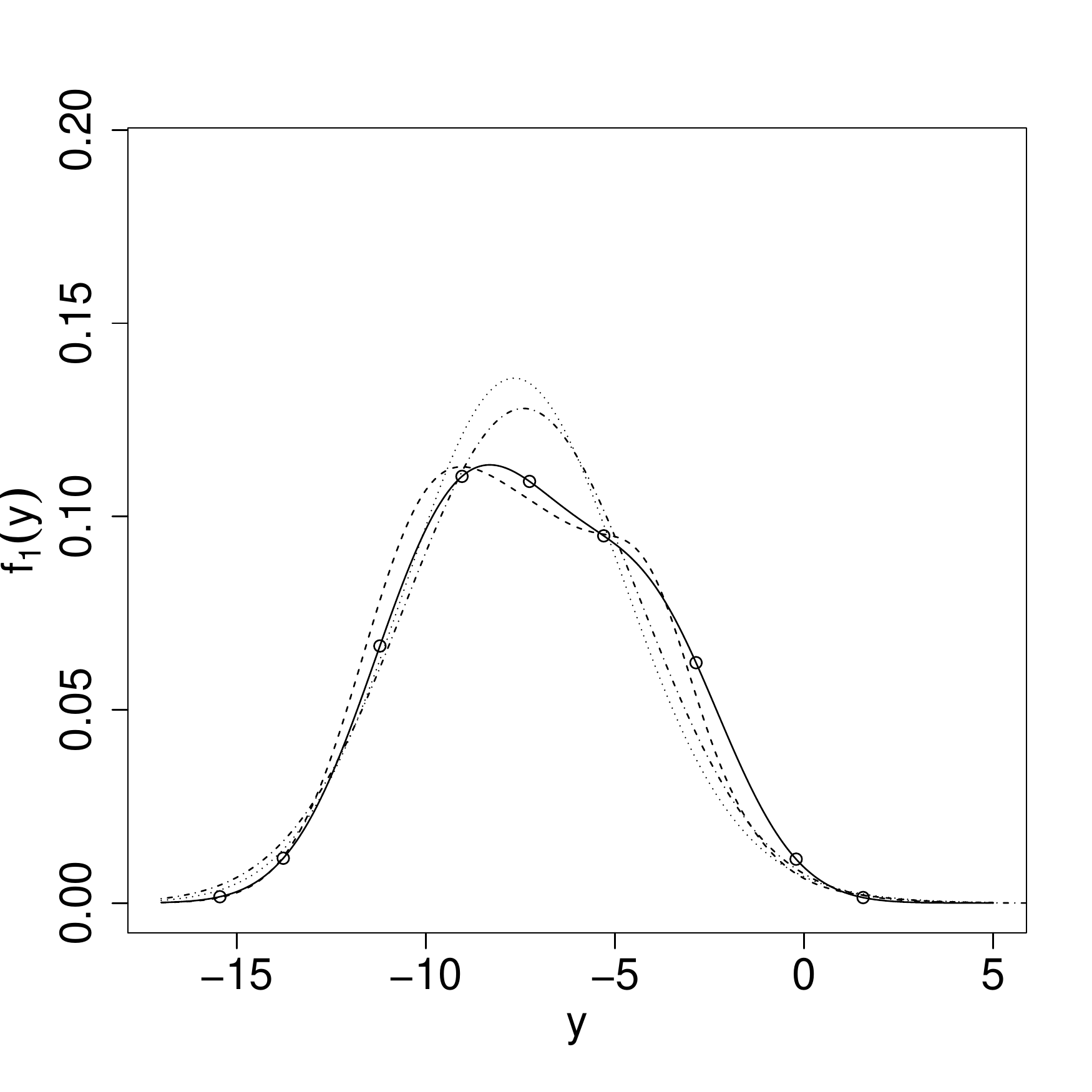}
\includegraphics[width=0.3\textwidth]{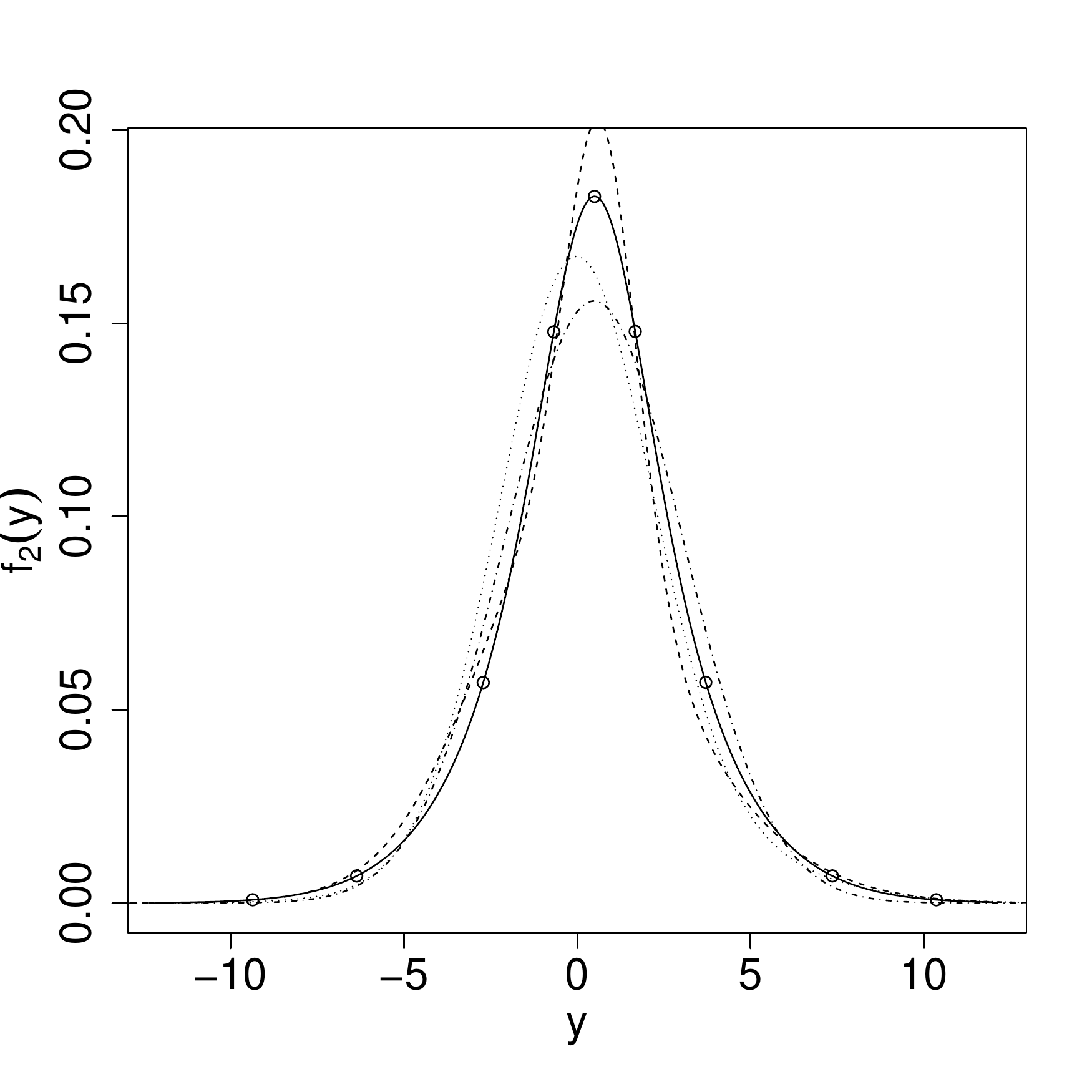}
\includegraphics[width=0.3\textwidth]{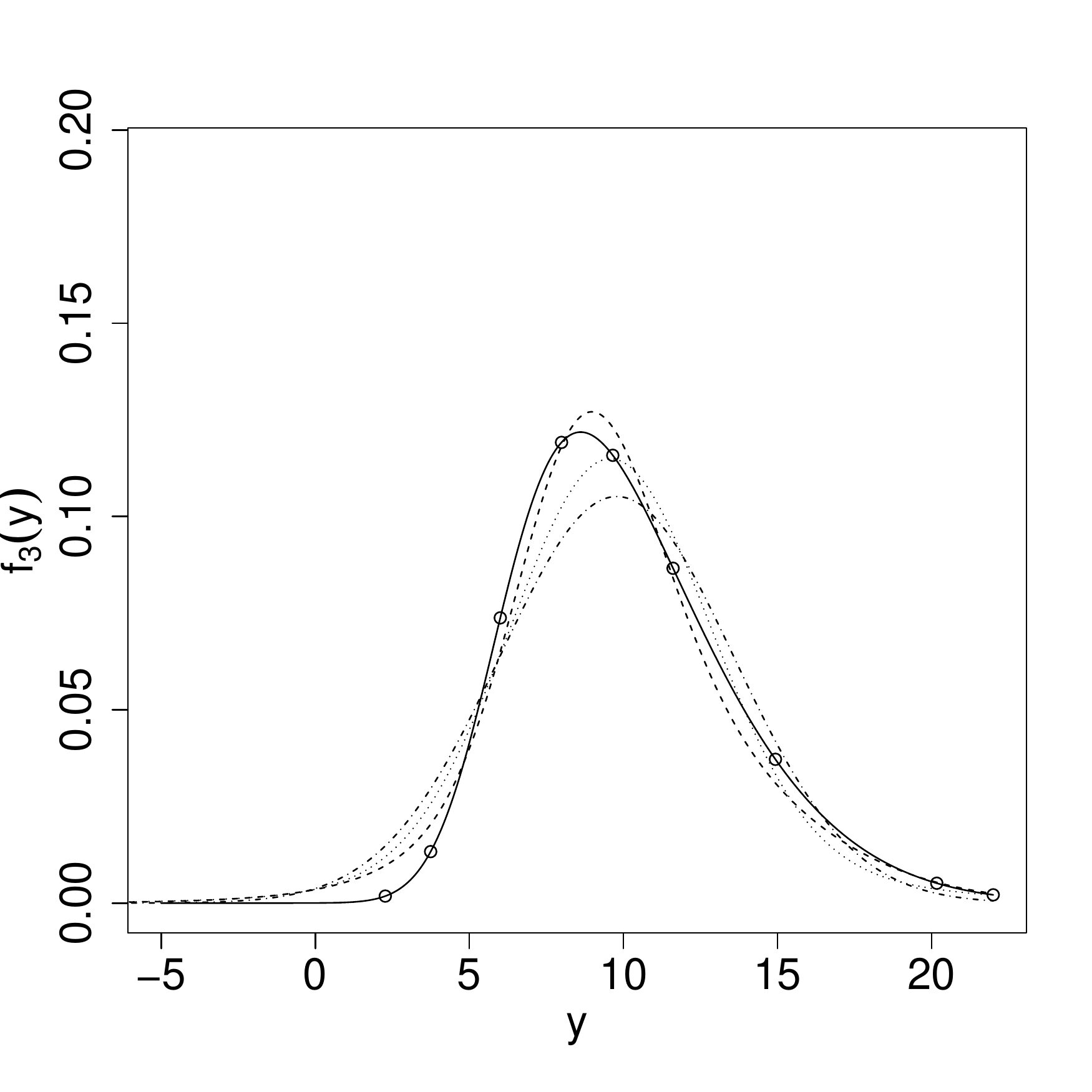}
\caption{State-dependent densities and estimators for a typical sample. Solid line: true densities, dashed line: nonparametric maximum likelihood estimators, dotted line: two-component mixture maximum likelihood estimators, dot-dashed line: Gaussian maximum likelihood estimators }
\label{fig:estimates}
\end{figure}
Figure \ref{fig:estimates} shows the state-dependent normal mixture densities $f_{k,0}$ as well as the fits $f_{\hat{\mu}_{k,n}}$, $f_{\bar{\mu}_{k,n}}$ and $f_{\tilde{\mu}_{k,n}}$ for a typical sample. 
%
%
The nonparametric estimator captures the overall shape of the underlying density, in particular its skewness, much better than both parametric estimators, which deviate substantially from it.   
%
%
%
 \subsection{Goodness of fit test} \label{sec:simtest}
In this section we conduct a formal goodness of fit evaluation for a Gaussian hidden Markov model in the setting of Section \ref{sec:simestimaton}. We use the likelihood ratio test against the nonparametric alternative of state-dependent general Gaussian mixtures as well as against the parametric alternative of state-dependent two-component Gaussian mixtures. 
Critical values are estimated by using the parametric bootstrap. Its consistency requires the asymptotic distribution to depend continuously on nuisance parameters, see \citet{vanderVaart}, and caution is needed in irregular problems, see \citet{Drton}.   

To avoid excessive running times, we estimate critical values under the null model only once. First we use a single long series of the hidden Markov model with parameters as given in Section \ref{sec:simestimaton} and estimate the parameters under the null hypothesis of a Gaussian hidden Markov model, then from this estimated null model we simulate 10000 series of lengths 1000 to obtain the critical values against both classes of alternatives. Each simulated series requires about 85 seconds running time, so that on a computer with 12 processing units a total of 20 hours are required. 
Next, we also simulate from the model in Section \ref{sec:simestimaton} 10000 series of lengths 1000 and use the simulated critical values to estimate the power.

The results for three significance levels are shown in Table \ref{tab:test}. 
\begin{table}[t]
\begin{tabular}{ccc}
 & Parametric vs two-component mixture & Parametric vs nonparametric \\ 
Critical value (90\%) &  1$\cdot$72  &2$\cdot$42 	 \\ 
Simulated power & 95$\cdot$67 & 96$\cdot$44 \\ 
\\
Critical value (95\%)  &  2$\cdot$57 & 3$\cdot$57 \\ 
Simulated power & 92$\cdot$07 & 93$\cdot$97 \\ 
\\
Critical value (99\%)  & 4$\cdot$33  & 6$\cdot$03 \\ 
Simulated power  & 79$\cdot$83 & 85$\cdot$27 \\
\end{tabular} 
\caption{Simulated critical values and powers of the likelihood ratio tests }
\label{tab:test}
\end{table}
Although the critical values for the nonparametric test are larger, it still has slightly higher power at all three significance levels. 
\section{Discussion}\label{sec:discussion}
We obtain nonparametric identification for hidden Markov models under assumptions that are close to minimal. In particular, linear independence of the state-dependent distributions is not required; they are merely assumed to be distinct. By example, we show the necessity of a full rank of the transition probability matrix. Ergodicity of $\Gamma$,  which is assumed in most statistical estimation procedures in a parametric framework, is equivalent to irreducibility and aperiodicity. The proof of Theorem \ref{cor:identagainstgeneralparstat} does not require aperiodicity, so the conclusion holds without it, while the proof of Theorem \ref{the:identgeneral} does require ergodicity. Irreducibility could potentially be dropped by considering communicating classes. 

The majority of hidden Markov models used in applications have parametric state-dependent distributions. However, the need for distributions more flexible than the normal has been recognized in various papers. A popular alternative class are finite mixtures of normals with a given maximal number of components, see \citet{Volant2013} and  \citet{HolzmannSchwaiger2013}. 
The fact that the nonparametric maximum likelihood estimator also has a finite number of components, although potentially growing with the sample size, makes the use of finite mixtures as state-dependent distributions even more attractive. 
Further, as demonstrated in the simulation section, comparison of parametric with nonparametric fits may be used for goodness of fit assessments, both formally by employing likelihood ratio tests and by visually comparing the parametric and nonparametric density estimates.

\section*{Acknowledgement}
The authors would like to thank the editor, the associate editor and three reviewers for helpful comments which lead to improved contents and presentation. Anna Leister and Hajo Holzmann gratefully acknowledge financial support of the ``Deutsche Forschungsgemeinschaft''. 
\section*{Supplementary material}
\label{SM}
Supplementary material available at \Bka\ online includes proofs of the results and additional simulations for a scenario with linearly dependent state-dependent distributions. 

\appendix


\section{Outline of the proofs}

We present outlines of the proofs of the theorems. 

\begin{proof}[{\it Proof of Theorem \ref{cor:identagainstgeneralparstat}}]
The proof follows that of Theorem 1 in \citet{gassiat}, which in turn combines arguments in \citet{allman} for generic identification of hidden Markov models and finite mixtures of product distributions. 

In the first step, for $T \geq K-1$ we form the blocks 
\[ V_T = Y_1^T = \big(Y_1, \ldots, Y_T\big), \qquad W_T = Y_{T+2}^{2T+1} = \big(Y_{T+2}, \ldots, Y_{2T+1}\big),\]
and show that the conditional distributions 
\[ G_T(y_1^T;k) = \text{pr}\,\big(W_T \leq y_1^T \mid X_{T+1} = k\big) \quad (k=1, \ldots, K)\]
are linearly independent, and so are 
\begin{equation}\label{eq:conddistrfct}
 H_T(y_1^T;k) =  \text{pr}\,\big(V_T \leq y_1^T \mid X_{T+1} = k\big)\quad (k=1, \ldots, K).
\end{equation}
This is the crucial non-obvious step in our setting, since the state-dependent distribution functions $F_1, \ldots, F_K$ may be linearly dependent, and it requires some technical effort. It is essential to make the arguments in  \citet{allman} work. 

In the second step, we follow \citet{allman}, who rely on Theorem 4a of 
\citet{kruskal} and conclude that for $T \geq K-1$  the distribution functions $H_T(; k), F_k, G_T(;k)$ ($k=1, \ldots, K$) are identified up to joint label swapping. The linear independence from step 1 together with the assumption that the $F_k$ are all distinct will result in a sum of Kruskal ranks not less than $2K+2$, see the supplement, as required for the argument in this step.

In the third step, we relate the identified distributions $G_{K-1}(\cdot;k)$ and $G_{K}(\cdot;l)$ ($k,l=1, \ldots, K$) via the transition probability matrix $\Gamma$ which will thus also be identified.  
\end{proof}

\begin{proof}[{\it Proof of Theorem \ref{the:identgeneral}}]

To identify the $ H_T(\cdot;k)$ $(k=1, \ldots, K)$ in (\ref{eq:conddistrfct}), we consider the time reversal 
\[ \big\{(X_{T+1}, Y_{T+1}), \ldots,  (X_{1}, Y_{1})\big\},\]
which is a segment of a hidden Markov model with inhomogeneous underlying Markov chain and state-dependent distributions $F_1, \ldots, F_K$, the Markov chain starting in $\lambda \Gamma^T$. For technical reasons, we first require that $\Gamma$ and $\lambda$ have only positive entries, and then relax these assumptions by using higher transitions of order  $t_0 = K^2-2K+2$ which results in a transition probability matrix $\Gamma^{t_0}$ which has strictly positive entries \citep{holladay}, as well as by starting at time $t_0$, which results in a starting distribution $\lambda \Gamma^{t_0}$ with positive entries. 
\end{proof}

\begin{proof}[{\it Proof of Theorem \ref{the:mlcontrast}
}]
The existence of the limit as well as its independence from the starting distributions may be deduced from Kingman's subadditive ergodic theorem, as shown in \citet{ler}. To show definiteness, from the construction in \citet{ler}, letting
\[ {\Delta}^{K-1} = \big\{(s_1, \ldots, s_K) \in [0,1]^K:\quad s_1 + \cdots + s_K=1 \big\}\]
denote the $(K-1)$-dimensional unit simplex, one obtains a probability measure $Q$ on ${\Delta}^{K-1} \times {\Delta}^{K-1}$ such that for $T \geq 2$, 
\begin{align}\label{eq:Kulbackdiff}
\begin{split}
& T\, K\{(\Gamma_0, f_{1,0}, \ldots, f_{K,0}),(\Gamma, f_1, \ldots, f_K) \}\\
= & \int  \int  g_T(y_1^T;u, \Gamma_0, f_{1,0}, \ldots, f_{K,0})
\log \Big\{\frac{g_T(y_1^T;u, \Gamma_0, f_{1,0}, \ldots, f_{K,0})}{g_T(y_1^T;v, \Gamma, f_{1}, \ldots, f_{K})} \Big\}\, d\nu^{\otimes T}(y_1^T)\, dQ(u,v).
\end{split}
\end{align}
The inner integral corresponds to the ordinary Kullback--Leibler divergence of the distribution of the segments $(Y_1, \ldots, Y_T)$ from two hidden Markov models with parameters $ u, \Gamma_0, f_{1,0}, \ldots, f_{K,0}$ and $v, \Gamma, f_{1}, \ldots, f_{K}$, $u$ and $v$ denoting the starting distributions. Non-negativity is then obvious. To show definiteness, choose $T = (2 K+1) (K^2-2K+2) + 1$. From Theorem \ref{the:identgeneral}, which implies identification with arbitrary starting distributions, it follows that for distinct parameters $ \Gamma_0, f_{1,0}, \ldots, f_{K,0}$ and $ \Gamma, f_{1}, \ldots, f_{K}$, the inner integral is strictly positive for any values of $u$ and $v$, and hence so is (\ref{eq:Kulbackdiff}). 
\end{proof}

\begin{proof}[{\it Proof of Theorem \ref{prop:existencemle}}]
This follows using arguments from convex analysis  similar to those in \citet{lindsay}. 
\end{proof}

\begin{proof}[{\it Proof of Theorem \ref{th:consistency}
}]
To prove the theorem we may follow the arguments in \citet{ler} for the parametric case to obtain the consistency of $\hat \Gamma_n$ as well as $d_{w}\big(\hat \mu_{k,n}, \tilde \Theta_{k,0} \big) \to 0$ in probability, where
\[ \tilde \Theta_{k,0} = \big\{ \mu \in \tilde \Theta:\quad f_\mu = f_{\mu_{k,0}} \big\} \quad (k=1, \ldots, K).\]
The main additional issue is to conclude that $f_{\hat \mu_{k,n}}(y) \to f_{k,0}(y)$ if $\tilde \Theta_{k,0}$ contains more than a single mixing distribution. Here for fixed $y \in \cS$ we use approximation of $\vartheta \mapsto f_\vartheta(y)$  by Lipschitz-continuous functions and the bounded Lipschitz metric on $\tilde \Theta$.
\end{proof}

\bibliographystyle{ims}
\bibliography{db}


\end{document}


\LARGE
\vspace*{0.5cm}
\begin{center}
%
%
%
{\bfseries Supplementary material for \\Nonparametric identification and maximum likelihood estimation for hidden Markov models\\*[0.4cm]}
%
\large
%
\textbf{Grigory Alexandrovich, Hajo Holzmann\footnote{Address for correspondence: Prof.~Dr.~Hajo Holzmann, 
Philipps-Universität Marburg,
Fachbereich Mathematik und Informatik, 
Hans-Meerweinstr. 
D-35032 Marburg, Germany
email: holzmann@mathematik.uni-marburg.de,
Fon: + 49 6421 2825454
} and Anna Leister}\\*[0.4cm]
%
{\sl Fakult\"at f\"ur Mathematik und Informatik, Philipps-Universit\"at Marburg, Germany}
%
\end{center}
%
\normalsize
%
\section{Proofs of the identification results}


\subsection{Proofs of the main results}
%
For convenience, we recall the notation, the assumptions and the statements of the theorems. 
A discrete-time hidden Markov model consists of an observed process $(Y_t)_{t \in \N}$ and a latent, unobserved process $(X_t)_{t \in \N}$, such that 
%
first, the $Y_t$ are independent given the $X_t$, second, the conditional distribution of $Y_s$ given the $X_t$ depends on $X_s$ only and third, $X_t$ is a finite-state Markov chain. We assume that $X_t$ is time-homogeneous. 
The cardinality $K$ of the state space of $X_t$ is called the number of states. The conditional distributions of $Y_s$ given $X_s = k$ ($k=1, \ldots, K$), are called the state-dependent distributions, and  we assume that they are independent of $s$. The entries of the transition probability matrix are denoted by $\Gamma= (\alpha_{j,k})_{j,k=1, \ldots, K}$. Further, assume that the $Y_t$ take values in any subset of Euclidean space $\cS \subset \R^q$, and denote the distribution functions of the state-dependent distributions by $F_k$ $(k=1, \ldots, K)$. 

\begin{assumption}\label{eq:tpm}
%
The transition probability matrix $\Gamma= (\alpha_{j,k})_{j,k=1, \ldots, K}$ of $(X_t)$ has full rank and is ergodic.%
%
\end{assumption}

\begin{assumption}\label{eq:statedep}
%
The state-dependent distributions $F_k$ ($k=1, \ldots, K$) are all distinct.
%
\end{assumption}

\begin{assumption}\label{eq:stationary}
%
The Markov chain $(X_t)$ is stationary with starting distribution $\pi$, the stationary distribution of $\Gamma$. 
%
\end{assumption}
%

\begin{theorem}\label{cor:identagainstgeneralparstat}
%
For given $K$, let $\Gamma$, $F_1, \ldots, F_K$ and $\tilde \Gamma$, $\tilde F_1, \ldots, \tilde F_K$ be two sets of parameters for a $K$-state hidden Markov model, such that the joint distributions of $\big(Y_1, \ldots, Y_{2 K +1}\big)$ under both sets of parameters are equal. Further, suppose that $\Gamma$ and $F_1, \ldots, F_K$ satisfy Assumptions \ref{eq:tpm}--\ref{eq:stationary}. Then both sets of parameters coincide up to label swapping. 
%
\end{theorem}

In order to keep the arguments as transparent as possible, we first prove the following result. 

\begin{proposition}\label{the:firstthident}
%
Suppose that for a known number of states $K$, Assumptions \ref{eq:tpm}--\ref{eq:stationary} are satisfied. Then the parameters $\Gamma$ and $F_1, \ldots, F_K$ are identified from the joint distribution of 
%
$ \big(Y_1, \ldots, Y_{2 K +1}\big)$
%
up to label swapping. 
%
\end{proposition}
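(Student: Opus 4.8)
The plan is to present the joint law of the window $(Y_1,\dots,Y_{2K+1})$ as a three-view finite mixture and to invoke a Kruskal-type uniqueness theorem. Conditioning on the central hidden state $X_{K+1}$ and using the Markov property, the past block $U=(Y_1,\dots,Y_K)$, the central observation $Y_{K+1}$ and the future block $V=(Y_{K+2},\dots,Y_{2K+1})$ are mutually independent given $X_{K+1}$, so by Assumption~\ref{eq:stationary} the window law factorises as $\sum_{k=1}^K \pi_k\,\mathcal L(U\mid X_{K+1}=k)\otimes F_k\otimes \mathcal L(V\mid X_{K+1}=k)$. This is exactly a latent-class model with $K$ classes and three conditionally independent views. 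To make the finite-dimensional Kruskal theorem applicable I would first reduce to vectors of cell probabilities over an arbitrary measurable partition of $\cS$, establish identifiability of the discretised parameters for every sufficiently fine partition, and recover the $F_k$ and $\Gamma$ by refining the partitions.

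The crux is a linear-independence lemma: under Assumptions~\ref{eq:tpm}--\ref{eq:statedep} the $K$ one-sided block laws $\{\mathcal L((Y_1,\dots,Y_K)\mid X_1=k)\}_{k=1}^K$ are linearly independent, and likewise for the time-reversed block (whose chain has transition matrix $\pi_l\alpha_{l,k}/\pi_k$, again of full rank since the $\pi_k$ are positive). This is the only place where the full length-$K$ blocks, and hence the total length $2K+1$, are needed: Assumption~\ref{eq:statedep} yields only distinctness of the $F_k$, not their linear independence. I would prove the lemma by reducing to a scalar summary---since the $F_k$ are pairwise distinct, a generic bounded $h$ has pairwise distinct conditional means $t_k=\int h\,dF_k$---and writing block expectations in transfer-operator form $\mathbb E[\prod_t g_t(Y_t)\mid X_1=\cdot]=D_1\Gamma D_2\cdots\Gamma D_K\mathbf 1$ with $D_t=\mathrm{diag}(\int g_t\,dF_1,\dots,\int g_t\,dF_K)$. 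Taking $g_t\in\{1,h\}$, the attainable vectors include $T^{e_1}\Gamma\cdots\Gamma T^{e_K}\mathbf 1$ for $T=\mathrm{diag}(t_1,\dots,t_K)$ and $e\in\{0,1\}^K$, and the invertibility of $\Gamma$ together with the distinct entries of $T$ yields a Vandermonde/Krylov-type nondegeneracy forcing these vectors to span $\R^K$; alternatively one argues by induction on the block length, peeling off the first observation through $\mathcal L(\mathrm{block}_m\mid X_1=k)=F_k\otimes\sum_l\alpha_{k,l}\,\mathcal L(\mathrm{block}_{m-1}\mid X_1=l)$. I expect this lemma to be the main obstacle; the remaining steps are essentially bookkeeping.

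With the two side views of Kruskal rank $K$ and the central view of Kruskal rank at least $2$---any two distinct probability measures are linearly independent, so distinctness already gives rank $\ge 2$---the three Kruskal ranks sum to at least $2K+2$, precisely the hypothesis of the finite Kruskal theorem for $K$ components. Applying it identifies $\pi_1,\dots,\pi_K$, the central components $F_1,\dots,F_K$, and the two side-block laws, all up to a single common label permutation. It remains to recover $\Gamma$: the identified components satisfy relations such as $\mathcal L(V\mid X_{K+1}=k)=\sum_l \alpha_{k,l}\,Q_l$, where $Q_l$ is the elementary length-$K$ forward block started from state $l$, and since the $Q_l$ are linearly independent by the lemma the transition probabilities are the unique solution; equivalently, one pairs biorthogonal state-extracting functionals---which exist precisely because the length-$K$ block laws are linearly independent---on the two non-overlapping sub-blocks $(Y_2,\dots,Y_{K+1})$ and $(Y_{K+2},\dots,Y_{2K+1})$, conditionally independent given $(X_{K+1},X_{K+2})$, to read off $P(X_{K+1}=m,X_{K+2}=l)=\pi_m\alpha_{m,l}$. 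Dividing by the identified $\pi_m$ yields $\Gamma$, and checking that $\pi$ is its stationary distribution completes the identification up to label swapping.
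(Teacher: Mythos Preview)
Your three-view Kruskal strategy---condition on the central state $X_{K+1}$, obtain Kruskal rank $K$ on each side block from a linear-independence lemma and Kruskal rank $\ge 2$ in the middle from mere distinctness, then recover $\Gamma$ afterwards---is exactly the paper's approach. Two places in your sketch, however, are thinner than the paper's proof and deserve comment.

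\emph{The linear-independence lemma.} Your first idea, fixing a single $h$ with pairwise distinct $t_k=\int h\,dF_k$ and invoking a ``Vandermonde/Krylov-type nondegeneracy'' for the products $T^{e_1}\Gamma\cdots\Gamma T^{e_K}\mathbf 1$, is not a proof: for instance there exist full-rank ergodic stochastic $\Gamma$ that preserve $\mathrm{span}\{\mathbf 1,T\mathbf 1\}$, so the span need not grow at each step for a \emph{fixed} diagonal $T$, and a genuine argument is required to show some product eventually escapes. The paper instead retains the full flexibility of $D_y=\mathrm{diag}\{F_1(y),\dots,F_K(y)\}$ over \emph{all} $y\in\cS$ and proves (Lemma~\ref{lemma:dimenextend}) by a careful contradiction that some $D_y\Gamma v_j$ must leave the current span; Lemma~\ref{lemma:indone} then runs the induction you allude to. Your ``peel off the first observation'' alternative is the right structure, but the inductive step is precisely this lemma and is the technical heart of the whole proof.

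\emph{Recovering $\Gamma$.} Your relation $G_K(\cdot;k)=\sum_l\alpha_{k,l}Q_l$ with $Q_l=\mathcal L((Y_1,\dots,Y_K)\mid X_1=l)$ is circular as written: Kruskal identifies only the $G_K(\cdot;k)$ (conditioning on $X_{K+1}$), not the $Q_l$ (conditioning on $X_{K+2}$). The fix---implicit in your biorthogonal variant---is that $Q_l=F_l\otimes G_{K-1}(\cdot;l)$, which \emph{is} identified once you run Kruskal also at block length $K-1$ and align the two labelings through the distinct $F_k$. This is exactly what the paper does in Step~\ref{step3prop1}: it identifies both $G_T$ and $G_{T+1}$ with $T=K-1$, matches labels via the $F_k$, and solves $[G_{T+1}\{(y,z_t);k\}]_{k,t}=\Gamma\,\mathrm{diag}\{F_1(y),\dots,F_K(y)\}\,A_1$ for $\Gamma$, using that $A_1=[G_T(z_t;k)]_{k,t}$ is invertible by the linear-independence lemma.
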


This proposition states that for given $K$, the parameters $\Gamma$ and $F_1, \ldots,F_K$ are identified within the class of parameters satisfying Assumptions \ref{eq:tpm}--\ref{eq:stationary}. However, from the proofs and exploiting the full strength of Theorem 4a in \citet{kruskal}, we also obtain Theorem \ref{cor:identagainstgeneralparstat}, as shown below. 

Before we turn to the proof of Proposition \ref{the:firstthident}, we introduce some notation and recall a result of \citet{kruskal}. For a vector space $V$ we let $\text{dim}(V)$ denote its dimension. For vectors $v_1, \ldots, v_n \in V$ we let $\text{span}\,\{v_1, \ldots, v_n\}$ denote the subspace of $V$ spanned by $v_1, \ldots, v_n$. Further, for numbers $x_1, \ldots, x_n \in \R$ we let $\text{diag}\,(x_1, \ldots, x_n)$ denote the $n$-dimensional diagonal matrix with entries $x_1, \ldots, x_n$. We let $1_K = (1, \ldots, 1) \in \R^K$ and $I_K = \text{diag}(1_K)$ denote the $K$-dimensional unit matrix. 
For vectors $z_1, z_2 \in \R^T$ we write $z_1 \leq z_2$ if this holds for each coordinate. 
For a matrix $M$ we let $M'$ denote its transpose. 
For given matrices $M_i \in \R^{K \times n_i}$ ($n_i \in \N$; $i=1,2,3$) let $[M_1 \, M_2]$ denote the $K \times (n_1 + n_2)$ block matrix and let 
%
\begin{equation}\label{eq:threewayarray}
 \big<M_1, M_2, M_3 \big>(i_1,i_2,i_3) = \sum_{k=1}^K (M_1)_{k,i_1} \, (M_2)_{k,i_2}\, (M_3)_{k,i_3}\quad (i_j=1,\ldots, n_j),
\end{equation}
%
a three-way array. The Kruskal rank of a matrix $M \in \R^{K \times n}$, denoted rank$_{Kr} M$, is the maximal $j$ ($j\in\{0,\hdots,K\}$), for which each set of $j$ rows in $M$ are linearly independent (as vectors in $\R^n$). 
Then Theorem 4a in 
\citet{kruskal} states that if $M_i, N_i \in \R^{K \times n_i}$ ($n_i \in \N$; $i=1,2,3$) are two sets of real matrices such that 
%
\[ \big<M_1, M_2, M_3 \big> = \big<N_1, N_2, N_3 \big>\]
%
and
%
\[ \text{rank}_{Kr} \, M_1  + \text{rank}_{Kr} \, M_2 + \text{rank}_{Kr} \, M_3 \geq 2 K +2,\]
%
then there exists a permutation matrix $P$ and diagonal matrices $\Lambda_i$, such that $\Lambda_1 \Lambda_2 \Lambda_3 = I_K$ and
%
$ N_i = \Lambda_i \, P {M}_i$ ($i=1,2,3$). \hfill $\diamond$


\begin{proof}[{\it { Proof of Proposition \ref{the:firstthident}}}]\qquad 
%
\begin{step}[Blocks in the joint distribution and linear independence of conditional distributions]\label{step1prop1}
For $T \geq K-1$ consider 
%
\[ V_T = Y_1^T = \big(Y_1, \ldots, Y_T\big) \text{  and  } W_T = Y_{T+2}^{2T+1} = \big(Y_{T+2}, \ldots, Y_{2T+1}\big).\]
%
The conditional distribution functions of $W_T$ given $X_{T+1} = k$ ($k=1, \ldots, K$), are given by
%
\begin{align*}\label{eq:conddistr}
G_T(y_1^T;k)  = \text{pr}\,\big(W_T \leq y_1^T \mid X_{T+1} = k\big)
   = \sum_{k_1 = 1}^K \cdots \sum_{k_T = 1}^K \alpha_{k, k_1}\, \prod_{t=2}^T \alpha_{k_{t-1}, k_{t}}\, \prod_{t=1}^T F_{k_t} (y_t).
\end{align*}
%
From Lemma \ref{lemma:indone} below we have that $G_T(\cdot;k)$ ($k=1, \ldots, K$) are linearly independent functions on $\cS^T$ and furthermore, there exist $z_1, \ldots, z_K \in \cS^T$ for which the $K \times K$-matrix
%
\[ A_1 = \big\{G_T(z_t; k )\big\}_{k,t=1, \ldots, K}\]
%
has full rank $K$. Here, $k$ is the row index and $t$ the column index. Further, consider the time reversal 
%
\[ \tilde \Gamma = \big(\tilde \alpha_{j,k}\big)_{j,k=1, \ldots, K}, \quad \tilde \alpha_{j,k} = \frac{\pi_k \alpha_{k,j}}{\pi_j}.\]
%
Then for $(y_T, \ldots, y_1) \in \cS^T$ we have that
%
\begin{align*}
 H_T(y_T, \ldots, y_1;k) & = \text{pr}\,\big\{V_T \leq (y_T, \ldots, y_1) \mid X_{T+1} = k\big\}\\
& =  \sum_{k_1 = 1}^K \cdots \sum_{k_T = 1}^K \tilde \alpha_{k, k_1}\, \prod_{t=1}^{T-1} \tilde \alpha_{k_t, k_{t+1}}\, \prod_{t=1}^T F_{k_t} (y_t).
\end{align*}
%
Applying Lemma \ref{lemma:indone} with $\tilde \Gamma$,  we conclude that $H_T(\cdot;k)$ ($k=1, \ldots, K$) are linearly independent functions on $\cS^T$ and furthermore, there exist $\tilde z_1, \ldots, \tilde z_K \in \cS^T$ for which we have the rank $K$ matrix
%
\begin{equation}\label{eq:hilfmatrixagain}
 A_2 = \big\{H_T(\tilde z_t; k )\big\}_{k,t=1, \ldots, K}.
\end{equation}
%
\end{step}
%
\begin{step}[Kruskall's theorem and identification of conditional distributions]\label{step2prop1}
%
In this step we show that under Assumptions \ref{eq:tpm} and \ref{eq:statedep}, for $T \geq K-1$  the distribution functions $H_T(; k), F_k, G_T(;k)$ ($k=1, \ldots, K$) are identified up to joint label swapping. 
%

Let $z, \tilde z \in \cS^T$ and $y \in \cS$ be arbitrary points. Set $m= K (K-1)/2$. From Lemma \ref{lemma:distinctKruskal} below there exist points $y_j \in \cS$ ($j=1, \ldots, m$), such that the $K \times (m+2)$-matrix 
%
\begin{equation*}\label{eq:matrix2}
M_2 = \big[ \{ F_i(y_j) \}_{i=1,\hdots,K; j=1,\hdots,m}, \{ F_i(y) \}_{i=1,\hdots,K}, 1_K \big]
\end{equation*}
has Kruskal rank at least 2. From step \ref{step1prop1} the $K \times (K+2)$-matrices
%
\begin{align*}
%
\begin{split}
%
M_3  & = \left[ A_1, \{ G_T(z;k) \}_{k=1,\hdots,K}, 1_K\right], \qquad
%
M_1  = \left[ A_2, \{ H_T(\tilde z;k) \}_{k=1,\hdots,K}, 1_K\right],\\
%
\tilde{M}_1 &= \textnormal{diag}(\pi)M_1,
%
\end{split}
\end{align*}
%
have full rank $K$, where we use $\pi_k > 0$ ($k=1,\ldots,K$) for $\tilde{M}_1$, and therefore 
\begin{align}
\label{kruskalr}
\text{rank}_{Kr}(\tilde{M_1}) + \text{rank}_{Kr}(M_2) + \text{rank}_{Kr}(M_3) = 2K + 2. 
\end{align}
%

Now we show that the three-dimensional array
%
\[ M = \big<\tilde{M}_1, M_2, M_3\big>\]
%
as defined in (\ref{eq:threewayarray}), %
is identified from the joint distribution of $Y_1^{2T +1}$. 
In the following, we write $z_{K+1} = z$, $\tilde z_{K+1} = \tilde z$, $y_{m+1} = y$. We have that 
%
\begin{align}\label{eq:theformulakrus1}
\begin{split}
& M(i,j,r) =  \sum_{k = 1}^K \pi_k\, H_T(\tilde z_i; k)  F_k(y_j) G_T(z_r; k) \\
 &= \sum_{k = 1}^K \pi_k \text{pr}\,(Y_1^T \leq \tilde z_i \mid X_{T+1} = k) \text{pr}\,(Y_{T+1} \leq y_j \mid X_{T+1} = k)\text{pr}\,(Y^{2T+1}_{T+2} \leq z_r \mid X_{T+1} = k)\\
 &= \sum_{k = 1}^K\pi_k \text{pr}\,(Y_1^T \leq \tilde z_i, Y_{T+1} \leq y_j ,  Y^{2T+1}_{T+2} \leq z_r \mid X_{T+1} = k) \\
 &= \text{pr}\,(Y_1^T \leq \tilde z_i, Y_{T+1} \leq y_j ,  Y^{2T+1}_{T+2} \leq z_r)\quad (1 \leq i;\ r \leq K+2;\ j= 1,\hdots,m+2).
\end{split}
\end{align}
%

%
Similarly, 
\begin{align}
\label{eq:theformulakrus2}
M(K+2,j,r) &= \text{pr}\,(Y_{T+1} \leq y_j ,  Y^{2T+1}_{T+2} \leq z_r),  & M(K+2,m+2,r) &= \text{pr}\,(Y^{2T+1}_{T+2} \leq z_r),\notag\\
M(i,m+2,r) &=  \text{pr}\,(Y_1^T \leq \tilde z_i ,  Y^{2T+1}_{T+2} \leq z_r), & M(K+2,j,
K+2) &= \text{pr}\,(Y_{T+1} \leq y_j),\\
M(i,j,K+2) &=  \text{pr}\,(Y_1^T \leq \tilde z_i, Y_{T+1} \leq y_j),  &
M(i,m+2,K+2) &= \text{pr}\,(Y_1^T \leq \tilde z_i),\notag
\end{align}
%
and $M(K+2, m+2, K+2) = 1$. Evidently, these quantities are identified from the distribution of $Y_1^{2T+1}$. 

Now, using (\ref{kruskalr}) we apply Theorem 4a in 
\citet{kruskal} to show that the matrices $\tilde{M}_1, M_2$ and $M_3$ are identified from $M$ up to scaling and permutation, that is there exist a permutation matrix $P$ and diagonal matrices $\Lambda_1, \Lambda_2, \Lambda_3$, such that $\Lambda_1 P \tilde{M}_1, \Lambda_2PM_2, $ and $\Lambda_3PM_3$ are known and the relationship $\Lambda_1 \Lambda_2 \Lambda_3 = I_K$ holds.
Since we know that in the last column of $M_2$ there are only ones, we obtain the $i$th diagonal element of the scaling matrix $\Lambda_2$ as $(\Lambda_2 P M_2)_{i, K+2}$ ($i=1,\ldots,K$). Similarly we find the matrix $\Lambda_3$. The elements of $\Lambda_1$ can then be determined by the relationship $\Lambda_1 \Lambda_2 \Lambda_3 = I_K$. Hence we identified the matrices  $\tilde{M}_1, M_2$ and $M_3$ up to simultaneous row permutations and therefore the values 
$H_{T}(\tilde z;k), F_k(y), G_T(z;k)$ at arbitrary points $z, \tilde z \in \cS^T$ and $y \in \cS$. 
Finally, we show that for distinct values of $z, \tilde z \in \cS^T$ and $y \in \cS$, the matrices $P$ and $\Lambda_3$ remain the same, so that there is a joint label swapping. Suppose that for distinct values, we get $\tilde P$ and $\tilde \Lambda_3$, 
The matrix $[A_1, 1_K]$, which is the submatrix of both versions of $M_3$ consisting of the first $K$ columns and the last column, we obtain 
%
\[ \Lambda_3 P [A_1, 1_K] = \tilde \Lambda_3 \tilde P [A_1, 1_K].\]
%
As above, since the last column of $ P [A_1, 1_K] = \Lambda_3^{-1}\, \tilde \Lambda_3 \, \tilde P [A_1, 1_K]$ equals $1_K$ as well as the diagonal entries of $\Lambda_3^{-1}\, \tilde \Lambda_3$, we get $\Lambda_3^{-1}\, \tilde \Lambda_3 = I_K$ and hence $P [A_1, 1_K] = \tilde P [A_1, 1_K] $. Since $[A_1, 1_K]$ has full row rank $K$, we get $P = \tilde P$, as required. 
\end{step}

\begin{step}[Identification of $\Gamma$]\label{step3prop1}
It remains to identify the transition probability matrix $\Gamma$. We choose $T=K-1$, and after applying the result in step \ref{step2prop1}, fix a labeling $H_T(\cdot; k), F_k, G_T(\cdot;k)$ ($k=1, \ldots, K$). 
For $z_1, \ldots, z_K \in \cS^T$ as in step \ref{step1prop1} and $y \in \cS$ we consider the $K \times K$-matrix 
%
\[ A = \big[G_{T+1}\{(y,z_t); k \}\big]_{k,t=1, \ldots, K}.\]
%
From step \ref{step2prop1},  $H_{T+1}(\cdot; k), F_k, G_{T+1}(\cdot;k)$ are identified up to joint label swapping and hence so is the matrix $A$. 
Since the $F_k$ are all distinct, we may choose the same labeling as the one fixed for $H_T(\cdot; k), F_k, G_T(\cdot;k)$ ($k=1, \ldots, K$). In this case, we have that
%
\[ A = \Gamma \,\textnormal{diag}\big\{F_1(y), \ldots, F_K(y) \big\}\, A_1, \]
%
where $A_1$ is defined in step \ref{step1prop1}. Now choose $y$ large enough so that $F_k(y) \not=0$ ($k=1, \ldots, K$), so that $\Gamma$ is identified as
%
\[ \Gamma=A \, A_1^{-1} \, \textnormal{diag}\big[\{F_1(y)\}^{-1}, \ldots, \{F_K(y)\}^{-1} \big], \]
%
which concludes the proof.
\end{step}
\end{proof}

\begin{proof}[{\it {Proof of Theorem \ref{cor:identagainstgeneralparstat}}}]

The sets of parameters are denoted by $\Gamma$ and $F_1, \ldots, F_K$ with stationary starting distribution $\pi$, and $\tilde \Gamma$ and $\tilde F_1, \ldots, \tilde F_K$ with arbitrary starting distribution $\lambda$. 

Step \ref{step1prop1} in the proof of Proposition \ref{the:firstthident} applies to the parameters $\Gamma$ and $F_1, \ldots, F_K$. We define the matrices $\tilde M_1$, $M_2$ and $M_3$ as in step \ref{step2prop1}, these satisfy (\ref{kruskalr}). Further, the matrices $N_1$, $N_2$ and $N_3$ as the conditional distribution functions of $V_T$, $Y_{T+1}$ and $W_T$ given $X_{T+1}=k$ under the parameters $\tilde \Gamma$, $\tilde F_1, \ldots, \tilde F_K$ and $\lambda$, evaluated at the same points as for $M_1$, $M_2$ and $M_3$. Let $\tilde N_1 = \textnormal{diag}\, (\lambda \tilde \Gamma^{T}) \, N_1$, where we observe that $\lambda \tilde \Gamma^{T}$ is the marginal distribution of $X_{T+1}$ under this parameter set. 

Now, (\ref{eq:theformulakrus1}) and (\ref{eq:theformulakrus2}) show that under the assumption that both sets of parameters induce the same distribution of $Y_1, \ldots, Y_{2 T+1}$, 
%
\[ \big<\tilde M_1, M_2, M_3  \big> = \big<\tilde N_1, N_2, N_3  \big>.\]
%
For an application of Theorem 4a in 
\citet{kruskal}, it suffices that the matrices $\tilde M_1$, $M_2$ and $M_3$ satisfy (\ref{kruskalr}), hence there is a $K\times K$ permutation matrix $P$ and diagonal matrices $\Lambda_i$ ($i=1,2,3$) with $\Lambda_1 \Lambda_2 \Lambda_3 = I_K$, such that 
%
\[ M_i = \Lambda_i P N_i \quad (i=2,3) \text{  and  }  \tilde M_1 = \Lambda_1 P \tilde N_1.\]
%
Since $M_i, N_i$ ($i=2,3$), have only ones in the last column, $\Lambda_2 = \Lambda_3 = I_K$ and hence also $\Lambda_1 = I_K$. It follows that $N_3$ and $\tilde N_1$ must also have full rank, and that $P$ is uniquely determined, that $\pi = \lambda \tilde \Gamma^T$ which are contained in the last column of $\tilde M_1 = \tilde N_1$, and that the conclusion of step \ref{step2prop1} in Proposition \ref{the:firstthident} holds true. The equality $\Gamma = \tilde \Gamma$ follows as in step \ref{step3prop1}, and since $\Gamma$ is invertible and $\pi \Gamma^{-1} = \pi$, from $\pi = \lambda \Gamma^T$ we obtain $\pi = \lambda$.  
%
\end{proof}

We let $\lambda$ denote an arbitrary $K$-state probability vector. 
%
\begin{theorem}\label{the:identgeneral}
%
For a known number of states $K$, let $\lambda, \Gamma$, $F_1, \ldots, F_K$ and $\tilde \lambda, \tilde \Gamma$, $\tilde F_1, \ldots, \tilde F_K$ be two sets of parameters for a $K$-state hidden Markov model, such that the joint distributions of $ \big(Y_1, \ldots, Y_{T}\big)$
%
with $T = (2 K+1) (K^2-2K+2) + 1$, are equal under both sets of parameters. Further, suppose that $\Gamma$ and $F_1, \ldots, F_K$ satisfy Assumptions \ref{eq:tpm} and \ref{eq:statedep}. Then both sets of parameters coincide up to label swapping. 
%
\end{theorem}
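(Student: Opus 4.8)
The genuinely new feature, compared with Theorem~\ref{cor:identagainstgeneralparstat}, is that neither starting distribution $\lambda$ nor $\tilde\lambda$ is stationary and both may be degenerate, i.e.\ have zero coordinates. My plan is to reduce to the three-way Kruskal argument of Proposition~\ref{the:firstthident} after \emph{thinning} the chain so that its effective transition matrix becomes strictly positive. Set $M = K^2-2K+2 = (K-1)^2+1$ and recall that for an ergodic, hence primitive, stochastic matrix the Wielandt bound on the index of primitivity gives $\Gamma^{M} > 0$ entrywise; consequently $\lambda\,\Gamma^{jM} > 0$ for every probability vector $\lambda$ and every $j\geq 1$. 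It is exactly this bound that pins down the value of $T$.

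The forward direction is insensitive to the starting distribution: conditionally on the splitting state the future conditional distribution functions depend only on $\Gamma$ and $F_1,\ldots,F_K$, so Lemma~\ref{lemma:indone} applies verbatim and, for a forward block of length at least $K-1$, produces the rank-$K$ matrix $A_1$ of Step~\ref{step1prop1}; crucially this argument requires no mixing and therefore holds equally for $\Gamma$ and for any of its powers. The backward direction is where stationarity was used in Proposition~\ref{the:firstthident}: the law of the past given the splitting state is governed by a time-inhomogeneous reversal, and the linear-independence argument degenerates as soon as some backward path carries zero weight, which is unavoidable once $\Gamma$ or $\lambda$ has zero entries. To repair this I would pass to the thinned model $\bar X_i = X_{(i-1)M+1}$, $\bar Y_i = Y_{(i-1)M+1}$ ($i=1,\ldots,2K+1$), again a $K$-state hidden Markov model with the same state-dependent distributions $F_1,\ldots,F_K$ and transition matrix $\bar\Gamma = \Gamma^{M}$. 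Since $\bar\Gamma>0$ by the Wielandt bound and $\det\bar\Gamma = (\det\Gamma)^M\neq 0$, the matrix $\bar\Gamma$ satisfies Assumption~\ref{eq:tpm}; because $\bar\Gamma>0$, every backward path now carries strictly positive weight and each splitting-state marginal $\lambda\,\bar\Gamma^{\,j}$ is strictly positive, so a direct repetition of the linear-independence argument yields the rank-$K$ backward matrix $A_2$, with the positive marginal $(\lambda\,\bar\Gamma^{\,j})_k$ taking the role formerly played by $\pi_k$. This is the only place where the Wielandt bound is needed.

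With $A_1$ and $A_2$ of full rank in the thinned model I would assemble $\tilde M_1,M_2,M_3$ and their counterparts $\tilde N_1,N_2,N_3$ exactly as in Step~\ref{step2prop1} and in the proof of Theorem~\ref{cor:identagainstgeneralparstat}, via identities~\eqref{eq:theformulakrus1}--\eqref{eq:theformulakrus2} with $(\lambda\,\bar\Gamma^{\,j})_k$ in place of $\pi_k$. Equality of the joint law of $(Y_1,\ldots,Y_T)$ forces $\langle\tilde M_1,M_2,M_3\rangle = \langle\tilde N_1,N_2,N_3\rangle$ on the thinned coordinates; as in Theorem~\ref{cor:identagainstgeneralparstat} it suffices that the first set of matrices obeys the Kruskal-rank condition~\eqref{kruskalr}, so Theorem~4a of \citet{kruskal} applies, the scaling matrices are trivial because of the columns of ones, and $F_1,\ldots,F_K$ together with a common labeling $\sigma$ are identified, giving $F_k=\tilde F_{\sigma(k)}$. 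It then remains to recover $\Gamma$ itself, since thinning identifies only $\Gamma^{M}$: because the forward argument needs no mixing, it holds along genuinely consecutive observations, and repeating Step~\ref{step3prop1} on a short consecutive block yields $A=\Gamma\,\mathrm{diag}\{F_1(y),\ldots,F_K(y)\}\,A_1$ and hence $\Gamma=\tilde\Gamma$ up to $\sigma$. Finally $\lambda=\tilde\lambda$ (up to $\sigma$) follows from $\mathrm{pr}(Y_1\leq y)=\sum_k\lambda_k F_k(y)$ and the linear independence of the now-identified distinct $F_1,\ldots,F_K$. A bookkeeping of the $2K+1$ thinned points, spaced $M$ apart, together with the consecutive block of length at most $K\leq M$ used to pin down $\Gamma$, reproduces $T=(2K+1)(K^2-2K+2)+1$.

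The step I expect to be the main obstacle is the backward full-rank property under a non-stationary, possibly degenerate $\lambda$: along consecutive observations the time-reversed conditional distributions need not be linearly independent, and it is precisely the Wielandt primitivity bound --- which forces the block length $M=(K-1)^2+1$ and thereby the stated $T$ --- that restores strict positivity of all backward paths and of the splitting-state marginals, after which the Kruskal machinery of Proposition~\ref{the:firstthident} carries over unchanged.
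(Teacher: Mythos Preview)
Your overall strategy---use the Wielandt/Holladay primitivity bound $M=(K-1)^2+1$ to restore strict positivity, run the Kruskal argument of Proposition~\ref{the:firstthident} on the resulting model, then return to consecutive observations for $\Gamma$---matches the paper's proof closely. One organisational difference: for the identification of $\Gamma$ and the $F_k$ the paper \emph{shifts} rather than thins. It observes (Step~\ref{step1th2}) that $(Y_{t_0+1},\ldots,Y_{t_0+2K+1})$, with $t_0=M$, has strictly positive starting vector $\lambda\Gamma^{t_0}$, and that positivity of the starting vector alone (not of $\Gamma$) already makes the inhomogeneous time-reversed transition matrices full rank, so the backward linear-independence argument of Lemma~\ref{lemma:indone} runs on \emph{consecutive} observations. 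This identifies $\Gamma$ directly and avoids the detour through $\Gamma^M$. Your sentence ``repeating Step~\ref{step3prop1} on a short consecutive block'' is a bit elliptical here, since Step~\ref{step3prop1} presupposes that the consecutive $G_T(\cdot;k)$ and $G_{T+1}(\cdot;k)$ have already been identified, which in turn needs a Kruskal decomposition on a consecutive block with full-rank backward matrix---i.e.\ precisely the paper's Step~\ref{step1th2}.

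The genuine gap is your identification of $\lambda$. You argue that $\lambda=\tilde\lambda$ follows from $\mathrm{pr}(Y_1\leq y)=\sum_k\lambda_k F_k(y)$ and ``the linear independence of the now-identified distinct $F_1,\ldots,F_K$''. But Assumption~\ref{eq:statedep} only asks that the $F_k$ be \emph{distinct}, not linearly independent; for instance $F_3=\tfrac12 F_1+\tfrac12 F_2$ is permitted, and the paper's second simulation scenario is built precisely around linearly dependent state-dependent distributions. In such cases the marginal of $Y_1$ does not determine $\lambda$. The paper's remedy (Step~\ref{step2th2}) is to identify, from the already-recovered $H_T(\cdot;k)$ and $H_{T+1}(\cdot;k)$, the time-reversed transition matrix $\tilde\Gamma^{(T+1)}$ via the relation $[H_{T+1}\{(\tilde z_t,y);k\}]_{k,t}=\tilde\Gamma^{(T+1)}\,\mathrm{diag}\{F_1(y),\ldots,F_K(y)\}\,A_2$; the ratios $\tilde\alpha^{(T+1)}_{j,i}/\alpha_{i,j}$ then yield $\lambda^{(T+1)}=\lambda\Gamma^T$ up to scale, and inverting $\Gamma$ gives $\lambda$. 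This argument uses only invertibility of $\Gamma$ and distinctness of the $F_k$ (to align the labellings of $H_T$ and $H_{T+1}$), never their linear independence. The paper runs this step on the thinned model, where both $\Gamma^{t_0}$ and $\lambda\Gamma^{t_0}$ are strictly positive, and it is this second use of the thinned block that actually consumes the full length $T=(2K+1)(K^2-2K+2)+1$.
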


\begin{proof}\qquad 
%
\setcounter{step}{0}
\begin{step}[First assume that $\lambda$ has only positive entries]\label{step1th2}
 We shall show that in this case, from the joint distribution of $ \big(Y_1, \ldots, Y_{2 K +1}\big)$ we identify $\Gamma$ and $F_1, \ldots, F_K$, and the conditional distributions
%
\begin{align*}
 H_T(y_1^T;k) = \text{pr}\,\big(Y_1^T\leq y_1^T\, \mid \, X_{T+1} = k\big) \qquad (k=1, \ldots, K;\ T=K-1,K),
\end{align*}
%
up to label swapping. To this end we may follow the proofs of Proposition \ref{the:firstthident} and Theorem \ref{cor:identagainstgeneralparstat}, and it remains to show that the distribution functions $H_T(\cdot ;k)$ ($k=1, \ldots, K$), are linearly independent, where $T=K - 1$. The time reversal 
%
$ (X_{T+1}, \ldots, X_1)$
%
is an inhomogeneous Markov chain, and therefore 
%
\[ \big\{(X_{T+1}, Y_{T+1}), \ldots,  (X_{1}, Y_{1})\big\}\]
%
 is a hidden Markov model with inhomogeneous underlying Markov chain and state-dependent distributions $F_1, \ldots, F_K$. In particular, 
%
\[ \lambda^{(t)} = \lambda \Gamma^{t-1},\qquad  \big(\tilde\Gamma^{(t)}\big)_{i,j} = \frac{\lambda^{(t)}_j \alpha_{j,i}}{\sum_{k=1}^K \lambda^{(t)}_k \alpha_{k,i}} = \big(\tilde \alpha_{i,j}^{(t)} \big)_{i,j=1, \ldots, K}\quad (t=1, \ldots, T)\]
%
and we have that
%
\[ H_T(y_1^T;k) = \sum_{k_1 = 1}^K \cdots \sum_{k_T = 1}^K \tilde \alpha_{k, k_T}^{T}\, \prod_{t=2}^T \tilde \alpha_{k_{t}, k_{t-1}}^{(t-1)}\, \prod_{t=1}^T F_{k_t} (y_t).\] 
%
%
Since all entries in $\lambda$ are strictly positive, the matrices $\tilde\Gamma^{(t)}$ ($t=1, \ldots, T$) all have full rank. The argument in the proof of Lemma \ref{lemma:indone} applies to show that the $H_T(\cdot ;k)$ ($k=1, \ldots, K$), are linearly independent. 
%
\end{step}
%
\begin{step}[ Both $\Gamma$ and $\lambda$ have only strictly positive entries]\label{step2th2}

We show that in this case all parameters $\lambda$, $\Gamma$ and $F_1, \ldots, F_K$ are identified from the joint distribution of 
%
$ \big(Y_1, \ldots, Y_{2 K +1}\big)$.
%
%
It remains to identify $\lambda$. We may argue similarly as in step \ref{step3prop1} of Proposition \ref{the:firstthident}. For $T=K-1$, we may identify both $H_T(\cdot; k)$ and $H_{T+1}(\cdot;k)$, where we have chosen a fixed equal labelling for both distribution functions.   
%
Again, we find $\tilde z_1, \ldots, \tilde z_K \in \cS^T$ such that the identified $K \times K$-matrix $A_2$ in (\ref{eq:hilfmatrixagain}) 
%
%
has full rank $K$ in this situation as well. For $y \in \cS$ consider the identified $K \times K$-matrix 
%
\begin{align*}
 \big[H_{T+1}\{(\tilde z_t, y); k\} \big]_{k,t=1, \ldots, K}\ = 
%
\tilde \Gamma^{(T+1)}\,  \textnormal{diag}\big\{F_1(y), \ldots, F_K(y) \big\} A_2, 
\end{align*}
%
which, for $y$ large enough so that $F_k(y) \not=0$ ($k=1, \ldots, K$), allows to identify $\tilde \Gamma^{(T+1)}$. 
%
Therefore, for each $j$, we identify
%
\[ \frac{\tilde \alpha_{j,i}^{(T+1)}}{\alpha_{i,j}} = \frac{\lambda_i^{(T+1)}}{c_j}\qquad (i=1, \ldots, K),\]
%
where $c_j$ is a positive constant. If we fix $j$, this identifies $\lambda^{(T+1)}$ up to scale. Since $\lambda^{(T+1)}$ is a probability vector, it is identified and since $\Gamma$ is invertible and identified and $\lambda^{(T+1)} = \lambda \Gamma^{T}$, $\lambda$ itself is identified.   
%
\end{step}
%
\begin{step}[Conclusion of the proof]
%
Now we conclude the proof of the theorem. 
Let $t_0 = K^2-2K+2$. Then from \citet{holladay}, $\Gamma^{t_0}$ has strictly positive entries. 
Observe that $\big(Y_{t_0+1}, \ldots, Y_{t_0+ 2K+1}\big)$ is a segment of a hidden Markov model with starting vector $\lambda \Gamma^{t_0}$, which has only positive entries.  Using step \ref{step1th2} we therefore identify $\Gamma$ and $F_1, \ldots, F_K$. Then, using the result in step \ref{step2th2}, from
%
\[\big(Y_{t_0+1},Y_{2\,t_0+1}, \ldots, Y_{(2K+1)\,t_0 + 1}\big),\]
%
which is a segment of a hidden Markov model where the Markov chain starts in $\lambda \Gamma^{t_0}$ and has transition probability matrix $\Gamma^{t_0}$, and the state-dependent distributions are $F_1, \ldots, F_K$, we identify $\tilde \lambda = \lambda \Gamma^{t_0}$, and therefore also $\lambda = \tilde \lambda \Gamma^{-t_0}$.  
%
\end{step}
\end{proof}


\subsection{Technical lemmas for the identification proofs}

\begin{lemma}
\label{lemma:distinctKruskal}
Let $G_k$\ ($k=1, \ldots, K$) be distinct distribution functions. Then there exist $y_1\ldots,y_m \in \cS$ where $m = K\,(K-1)/2$ such that the $K \times (m+1)$ matrix 
%
\[ \left[ \{G_i(y_j)\}_{i=1,\hdots,K; \ j=1,\hdots,m}, 1_K \right]\]
%
has Kruskal rank at least two.
%
\end{lemma}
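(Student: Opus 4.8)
The plan is to reduce the Kruskal-rank condition to a statement about distinctness of the rows, exploiting the all-ones last column. Writing $r_i = \big(G_i(y_1), \ldots, G_i(y_m), 1\big) \in \R^{m+1}$ for the $i$th row of the matrix, recall that $\text{rank}_{Kr} \geq 2$ means precisely that every pair of rows is linearly independent. First I would observe that, because each row has last coordinate equal to $1$, two rows $r_i$ and $r_{i'}$ can be proportional only with proportionality constant forced to be $1$; in particular each $r_i$ is nonzero, and $r_i, r_{i'}$ are linearly dependent if and only if $r_i = r_{i'}$. Thus the claim reduces to choosing points $y_1, \ldots, y_m$ for which the $K$ rows are pairwise distinct.

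Next I would invoke the hypothesis that the $G_k$ are distinct distribution functions. For each of the $m = K(K-1)/2$ unordered pairs $\{i,i'\}$ with $i \neq i'$, distinctness of $G_i$ and $G_{i'}$ supplies a point $y_{\{i,i'\}} \in \cS$ with $G_i\big(y_{\{i,i'\}}\big) \neq G_{i'}\big(y_{\{i,i'\}}\big)$. Enumerating the pairs produces exactly $m$ points, which I relabel $y_1, \ldots, y_m$; this is where the value $m = K(K-1)/2$ enters, as the number of pairs to be separated.

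Finally I would verify that these points suffice. Given any two distinct indices $i, i'$, the coordinate of $r_i$ and $r_{i'}$ associated with the pair $\{i,i'\}$ differ by construction, so $r_i \neq r_{i'}$, and by the first step $r_i$ and $r_{i'}$ are then linearly independent. Since this holds for every pair, the matrix $\big[\{G_i(y_j)\}_{i,j},\, 1_K\big]$ has Kruskal rank at least two, as required.

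I do not anticipate a serious obstacle: the content of the argument is the initial reduction, where the all-ones column is exactly what upgrades \emph{distinct rows} to \emph{linearly independent rows} by pinning any proportionality constant to $1$. The remainder is a counting argument matching the number of index pairs to $m$, together with the defining property of distinct distribution functions.
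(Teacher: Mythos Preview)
Your proof is correct and follows the same approach as the paper: for each of the $m=K(K-1)/2$ pairs $\{i,i'\}$, pick a point where $G_i$ and $G_{i'}$ differ. The paper's proof is terser and leaves implicit the reduction you spell out, namely that the all-ones column forces any proportionality between rows to have constant $1$, so pairwise linear independence is equivalent to pairwise distinctness.
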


\begin{proof}
%
There exists a $y_{i,j} \in {\cS}$ such that $G_i(y) \neq G_j(y)$ ($i,j\in\{1,\hdots,K\}$; \ $i<j$). Let $y_1,\ldots,y_m$ be points corresponding to the $m$ pairs of indices. 
\end{proof}

The next lemma is the key technical result.  

\begin{lemma}\label{lemma:dimenextend}
Let $t \leq K-1$ and $v_1, \ldots, v_t \in \R^K$ be linearly independent vectors. Assume that the entries of $v_1$ are all strictly positive. Let $\Gamma$ be a $K\times K$ stochastic matrix of full rank and let $F_1, \ldots, F_K$ be distinct distribution functions. 
%
%
Then there exist $y \in \cS$ and a $j \in \{1, \ldots, t\}$ for which, letting 
%
\[ D_y = \textnormal{diag}\, \big\{F_1(y), \ldots, F_K(y)\big\},\]
%
the $K \times (t+1)$-matrix
%
\[ \big[\Gamma v_1, \ldots, \Gamma v_t, D_y\Gamma v_j \big]\]
%
has full rank $t+1$. 
%
\end{lemma}

\begin{proof}
First, we can construct vectors $o^{(1)}, \ldots, o^{(K-t)} \in \R^K$ orthogonal to \linebreak $\textnormal{span}\,  \{ \Gamma v_1, \ldots, \Gamma v_t\}$, which are of the form
%
\begin{equation*}
o^{(i)} = [o_1^{(i)}, \ldots, o_t^{(i)}, 0, \ldots,0, -1,0, \ldots, 0 ] \quad (i=1, \ldots, K-t),
\end{equation*}
%
where the $-1$ is at the $(t+i)$th place, after possibly relabeling the coordinates of $\R^K$. Indeed, observe that the $K \times t$ matrix $\Gamma \, [v_1, \ldots, v_t]$ has rank $t$, so that there are $t$ linearly independent rows. Denote by $M$ the $t \times t$ matrix formed from these rows, and by $N$ the $(K-t) \times t$ matrix consisting of the remaining rows, and assume after relabeling that 
%
\[ \Gamma \, [v_1, \ldots, v_t] = \left[\begin{array} {c} M \\ N \end{array} \right].\]
%
For $e_i \in \R^{K-t}$ the $i$th unit vector, we may set 
%
$ o^{(i)} = \big[e_i' N M^{-1}, - e_i' \big]'$. 
%
Now, if there exist $y \in \cS$ for which 
%
$(D_y\Gamma v_j)' o^{(i)} \neq 0
$ for some $i \in \{1 , \ldots , K-t\}$ and $j\in  \{ 1, \ldots ,t\}$, 
%
then $D_y\Gamma v_j$ cannot be contained in the $t$-dimensional subspace $\textnormal{span}\,  \{ \Gamma v_1, \ldots, \Gamma v_t\}$ of $\R^K$, and the assertion of the lemma follows. 

Thus assume that 
%
\begin{equation}
\label{eq:ygeneral}
(D_y\Gamma v_j)' o^{(i)} = 0 \quad (y \in \cS;\ i\in\{1,\hdots,K-t\}; \ j\in\{1,\hdots,t\}),
\end{equation}
%
this will lead to a contradiction. Let $\gamma_1, \ldots, \gamma_K$ denote the row vectors of $\Gamma$. Set 
%
\begin{equation*}
S_i = \textnormal{span}\, \big\{o^{(i)}_1 F_1(y) \gamma_1  + \cdots + o^{(i)}_t  F_t(y)  \gamma_t -   F_{t+i}(y)\gamma_{t+i} \; \mid \;  y \in \cS \big\} \quad (i=1, \ldots, K-t).
\end{equation*}
%
Then (\ref{eq:ygeneral}) implies that 
%
\begin{equation}
  	\label{eq:spans}
	\textnormal{span}\, \big\{S_1, \ldots, S_{K-t} \big\} \subseteq \textnormal{span}\, \big\{ v_1, \ldots, v_{t} \big\}^{\perp}. \\
\end{equation}
%
We first argue that if (\ref{eq:spans}) holds, 
%
\begin{equation}\label{eq:dimsizesi}
\dim S_i \geq 2 \quad (i =1, \ldots, K-t).
\end{equation}
%
To this end we assert that among the first $t$ elements of  $o^{(i)}$  there is at least one non-zero entry. Indeed, suppose that all $t$ entries were equal zero, then by the construction of $o^{(i)}$, definition of $S_i$ and (\ref{eq:spans}), we get that
$$
F_{t+i}(y)\gamma_{t+i} v_1 = 0, \qquad y \in \cS,
$$
a contradiction since $ \gamma_{t+i} v_1   > 0$ and since we assume that $v_1$ has strictly positive entries. 

Thus, assume that $j \in \{1, \ldots, t\}$ is such that $o^{(i)}_j \not=0$. Since $F_j$ and $F_{t+i}$ are distinct distribution functions, there exist $y^{(i)}_1, y_2^{(i)}$ such that the vectors
%
\[ \big[F_j(y^{(i)}_1), F_{t+i}(y^{(i)}_1)\big]', \qquad 
\big[F_j(y^{(i)}_2),  F_{t+i}(y^{(i)}_2)\big]'\] 
%
are linearly independent, and hence so are the vectors
%
\[  \big[ o^{(i)}_1 F_1(y^{(i)}_l), \ldots , o^{(i)}_t  F_t(y^{(i)}_l), -   F_{t+i}(y^{(i)}_l)\big]' \quad (l=1,2),\]
%
of coefficients of the linearly independent vectors $\gamma_1, \ldots, \gamma_t, \gamma_{t+i}$, which shows (\ref{eq:dimsizesi}).   
%
To conclude the proof, we observe that due to the linear independence of $\gamma_1, \ldots, \gamma_K$ and the definition of the $S_i$, we have that 
%
\[ S_i  \nsubseteq \textnormal{span} \Big\{ \bigcup_{j  =  1, \;  j\neq i }^{K-t} S_j \Big\}\quad  (i = 1, \ldots, K-t).\]
%
Together with (\ref{eq:dimsizesi}) we obtain  that 
%
\[ \dim \big(\textnormal{span}\,\big\{S_1, \ldots, S_{K-t} \big\}\big) \geq K-t +1,\]
%
a contradiction to (\ref{eq:spans}). This concludes the proof of the lemma. 
%
\end{proof}

\begin{lemma}\label{lemma:indone}
%
Under Assumptions \ref{eq:tpm} and \ref{eq:statedep}, for $T \geq K-1$ the conditional distributions of $W_T$ given $X_{T+1}=k$ ($k=1, \ldots, K$), that is the functions $G_T(\cdot ;k)$, are linearly independent, and furthermore, there exist $z_1, \ldots, z_K \in \cS^T$ such that the matrix 
%
\[ A_1 = \big\{G_T(z_t; k )\big\}_{k,t=1, \ldots, K}\]
%
has full rank $K$.
%
\end{lemma}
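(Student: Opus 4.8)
The plan is to exploit the product structure of the conditional distribution functions. Writing $D_y = \textnormal{diag}\{F_1(y), \ldots, F_K(y)\}$ for $y \in \cS$ and collecting the $K$ functions into the column vector $G_T(y_1^T) = \big(G_T(y_1^T;1), \ldots, G_T(y_1^T;K)\big)'$, the defining sum factorises as
\[ G_T(y_1, \ldots, y_T) = \Gamma D_{y_1}\,\Gamma D_{y_2}\cdots \Gamma D_{y_T}\,1_K, \]
because $\Gamma$ is stochastic and the innermost summation is absorbed by $1_K$. Linear independence of $G_T(\cdot;1), \ldots, G_T(\cdot;K)$ is then equivalent to the statement that the vectors $G_T(y_1^T)$, as $(y_1, \ldots, y_T)$ ranges over $\cS^T$, span all of $\R^K$, since a vanishing linear combination $\sum_k c_k G_T(\cdot;k)\equiv 0$ is exactly a vector $c$ orthogonal to their span. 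I would therefore aim to show that $U_T := \textnormal{span}\{G_T(y_1^T) : y_1^T \in \cS^T\}$ equals $\R^K$ whenever $T \geq K-1$.

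Next I would set up an induction on $T$ showing that $\dim U_T$ increases by at least one with each additional coordinate until it reaches $K$. Since $U_0 = \textnormal{span}\{1_K\}$ has dimension one, this yields $\dim U_{K-1} = K$. The increment is produced by Lemma \ref{lemma:dimenextend}. Using the recursion $U_T = \Gamma\cdot\textnormal{span}\{D_y g : y \in \cS,\ g \in U_{T-1}\}$, so that $\dim U_T$ equals the dimension of the inner span, I would feed the lemma a basis $v_1, \ldots, v_t$ with $v_1 = 1_K$ for which $\Gamma v_1, \ldots, \Gamma v_t$ spans $U_{T-1}$; this is admissible because $t = \dim U_{T-1} \leq K-1$ and $1_K$ has strictly positive entries. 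The lemma then returns a point $y$ and an index $j$ for which $D_y\Gamma v_j$ lies outside $\textnormal{span}\{\Gamma v_1, \ldots, \Gamma v_t\}$, supplying the extra direction that forces $\dim U_T \geq \dim U_{T-1}+1$. The positivity hypothesis required by the lemma is preserved automatically, since $\Gamma 1_K = 1_K$ keeps the all-ones vector available as the first basis element at every stage.

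The nestedness $U_{T-1} \subseteq U_T$, which is needed both inside the induction and to extend the conclusion from $T = K-1$ to all larger $T$, follows once $1_K \in U_1 = \textnormal{span}\{\Gamma D_y 1_K : y \in \cS\}$: writing $G_{T-1}(y_1^{T-1}) = L\,1_K$ and $G_T(y_1^{T-1}, z) = L\,(\Gamma D_z 1_K)$ with $L = \Gamma D_{y_1}\cdots\Gamma D_{y_{T-1}}$, membership $1_K \in U_1$ transports each length-$(T-1)$ vector into $U_T$. Finally, the existence of $z_1, \ldots, z_K \in \cS^T$ making $A_1 = \{G_T(z_t;k)\}_{k,t}$ of full rank is immediate from $U_T = \R^K$: the evaluation vectors $G_T(z)$ span $\R^K$, so $K$ of them can be chosen linearly independent, which is precisely full rank of $A_1$. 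I expect the delicate point to be the induction step itself, namely controlling how the span evolves when a single layer $\Gamma D_y$ is prepended, and in particular securing $1_K \in U_1$ — equivalently, access to the all-ones vector as the distribution functions $F_k$ increase to one — since this single fact underpins both the positivity invariant and the nestedness that make Lemma \ref{lemma:dimenextend} applicable at each stage.
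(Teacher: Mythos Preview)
Your proposal is correct and follows essentially the same route as the paper: both arguments proceed by induction on the block length, use Lemma \ref{lemma:dimenextend} at each step to gain one new direction, and invoke the limit $D_y \to I_K$ (equivalently, your $1_K \in U_1$) to anchor the induction and handle the passage from $T=K-1$ to larger $T$. The only cosmetic difference is that the paper works with $\tilde G_t$ (so that $\tilde A_1 = \Gamma A_1$) and carries along explicit evaluation points $z_j^{(t)}$, whereas you track the subspaces $U_T$ directly; one small point to tighten is that your dimension-increment step actually uses $U_{T-1}\subseteq W$ (the inner span before applying $\Gamma$), not just $U_{T-1}\subseteq U_T$, though this follows from the very same $D_y\to I_K$ argument you already flag.
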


\begin{proof}
%
We show the claim for $T=K-1$. Since marginal distributions of linearly dependent distributions remain linearly dependent, linear independence then follows for any $T \geq K-1$, and the existence of corresponding points $z_1, \ldots, z_K \in \cS^T$ follows from Lemma 17 in \citet{allman}. Consider
%
\begin{align*}
 \tilde G_t(y_1^t; k) & = F_k(y_1)\, \sum_{k_2 = 1}^K \cdots \sum_{k_t = 1}^K \alpha_{k, k_2}\, \prod_{s=2}^{t-1} \alpha_{k_s, k_{s+1}}\, \prod_{s=2}^t F_{k_s} (y_s)\\
	& = F_k(y_1)\, \gamma_k D_{y_2}\, \Gamma \,\cdots \,   \Gamma D_{y_t} \, 1_K\quad (k=1, \ldots, K,\ t\in\{1,\hdots, K-1\}),
\end{align*}
%
where as above, $D_y = \textnormal{diag}\, \big\{F_1(y), \ldots, F_K(y)\big\}$ and $\gamma_k$ are the row vectors of $\Gamma$. 
Since
%
\begin{align*}
%
\tilde A_1 & =  \{\tilde G_{K-1}(z_t; k) \}_{k,t=1, \ldots, K} = \Gamma A_1,
\end{align*}
%
it is enough to show the claim of the lemma for $\tilde A_1$. 
We proceed by induction and show that  there exist 
%
\[ z_1^{(t)}, \ldots, z_{t+1}^{(t)} \in \cS^t\quad (t=1, \ldots, K-1),\]
%
for which the vectors
%
\[ v_j^{(t)} = \big[ \tilde G_t(z_j^{(t)}; 1 ), \ldots, \tilde G_t(z_j^{(t)}; K )\big]\quad (j=1, \ldots, t+1),\]
%
are linearly independent, and  $v_1^{(t)}$ has only strictly positive entries. The case $t=K-1$ will then establish Lemma \ref{lemma:indone}. 

Indeed, since the distribution functions are distinct, for $t=1$ we find $y_1^{(1)}, y_2^{(1)} \in \cS$ for which
%
\[ v_j^{(1)} = \big[F_1(y_j^{(1)}),\ldots, F_K(y_j^{(1)})  \big]'\quad (j=1,2),\]
%
are linearly independent, and for which $v_1^{(1)}$ has only positive entries.
%

Now, suppose that the claim is valid for $t \leq K-1$. We apply Lemma \ref{lemma:dimenextend} and find a $y_0 \in \cS$ and a $j \in \{1, \ldots, t+1\}$ for which the $K \times (t+2)$ matrix
%
\[ M = \big[\Gamma v_1^{(t)}, \ldots, \Gamma v_{t+1}^{(t)}, \ D_{y_0}\, \Gamma v_{j}^{(t)}\big] \]
%
has full rank $t+2$, which means that it has a $(t+2) \times (t+2)$ submatrix of non-zero determinant. 
%
Since $D_y \to I_K$, as $y \to \infty$,
%
\[ \big[ D_y \Gamma v_1^{(t)}, \ldots, D_y \Gamma v_{t+1}^{(t)}, \ D_{y_0}\, \Gamma v_{j}^{(t)}\big] \to M,\qquad y \to \infty,\]
%
and the corresponding submatrix will also be of non-zero determinant for an appropriate $y \in \cS$. The claim for $t+1$ now follows by setting
%
\[ z_s^{(t+1)} = \big[ y, (z_s^{(t)})\big] \quad (s=1, \ldots, t+1),  \qquad  z_{t+2}^{(t+1)} = \big[ y_0, (z_j^{(t)})\big],\]
which concludes the proof.
\end{proof}

%
\section{Proofs for nonparametric maximum likelihood estimation}
%
%
\subsection{Proofs of the main results}
%
Let $\mathcal D$ be a class of densities on $\cS$ with respect to some $\sigma$-finite measure $\nu$. 
Suppose that $(Y_t, X_t)$ is a $K$-state hidden Markov model with transition probability matrix $\Gamma_0$ satisfying Assumptions \ref{eq:tpm} and \ref{eq:stationary} and having stationary distribution $\pi_0$, and that the state-dependent distributions $F_{1,0}, \ldots, F_{K,0}$ are all distinct and have densities $f_{1,0}, \ldots ,f_{K,0}$  from the class $\mathcal D$. 

For parameters $\lambda$, $\Gamma$, $f_1, \ldots, f_K$, $n \in \N$ and $y_1^n \in \cS^n$ consider 
%
\[ g_n\big(y_1^n; \lambda, \Gamma, f_1, \ldots , f_K \big) = \sum \limits_{x_1 = 1}^{K} \cdots \sum \limits_{x_n = 1}^{K}  \lambda_{x_1} f_{x_1}(y_1) \prod \limits_{i=2}^n \alpha_{x_{i-1}, \; x_i} f_{x_i}(y_i),\]
%
the joint density of $n$ observations under these parameters, and denote the log-likelihood function of $Y_1, \ldots, Y_n$ by
%
\[ L_{n}\big(\lambda, \Gamma, f_1, \ldots, f_K \big) = \log g_n\big(Y_{1}^{n}; \lambda, \Gamma, f_1, \ldots, f_K \big).\]
%
\begin{assumption}\label{eq:intble} 
The true densities $f_{j,0}\in \mathcal{D}$ satisfy $E \{|\log f_{j,0}(Y_1)|\} < \infty$,\quad ($j=1,\hdots,K$); 
\end{assumption}

\begin{assumption}\label{eq:intpospart} 
The model satisfies $E \{\log f (Y_1) \}^+ < \infty$, \quad ($f \in \mathcal{D}$). 
\end{assumption}

\begin{theorem}\label{the:mlcontrast}
%
Suppose that $(Y_t, X_t)$ is a $K$-state hidden Markov model with transition probability matrix $\Gamma_0$ satisfying Assumptions \ref{eq:tpm} and \ref{eq:stationary}, and that the state-dependent distributions $F_{1,0}, \ldots, F_{K,0}$ are all distinct and have densities $f_{1,0}, \ldots ,f_{K,0}$  from the class $\mathcal D$, and satisfy Assumption \ref{eq:intble}.  Let $\lambda, \lambda_0$ be $K$-state probability vectors with strictly positive entries. 
Under Assumption \ref{eq:intpospart}, given $f_1, \ldots, f_K \in \mathcal D$ we have almost surely as $n \to \infty$ that
%
\begin{align*}
\begin{split}\label{eq:kullbackhmm}
 n^{-1} \big\{L_{n}\big(\lambda, \Gamma, f_1, \ldots, f_K \big) & - L_{n}\big(\lambda_0,\Gamma_0, f_{1,0}, \ldots, f_{K,0}  \big) \big\}\\
	& \to - K\{(\Gamma_0, f_{1,0}, \ldots, f_{K,0}),(\Gamma, f_1, \ldots, f_K)\} \in (- \infty, 0],
\end{split}
\end{align*}
%
and $K\{(\Gamma_0, f_{1,0}, \ldots, f_{K,0}),(\Gamma, f_1, \ldots, f_K) \} = 0$ if and only if the two sets of parameters are equal up to label swapping. 
\end{theorem}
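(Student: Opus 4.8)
The plan is to follow the classical Leroux-type analysis of the hidden Markov log-likelihood and then feed the resulting identifiability condition into Theorem \ref{cor:identagainstgeneralparstat}. Since $\Gamma_0$ is ergodic (Assumption \ref{eq:tpm}) and the chain is started stationary (Assumption \ref{eq:stationary}), the observed process $(Y_t)$ is stationary and ergodic under the true parameters, which is the structural fact that drives everything. First I would write the log-likelihood as a telescoping sum of one-step predictive log-densities, $L_n(\lambda,\Gamma,f_1,\ldots,f_K)=\sum_{t=1}^n \log p_t$, where $p_t$ is the conditional density of $Y_t$ given $Y_1,\ldots,Y_{t-1}$ under these parameters, and then approximate $p_t$ by the conditional density of $Y_t$ given the infinite past $Y_{-\infty}^{t-1}$. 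The latter forms a stationary ergodic sequence, so Birkhoff's ergodic theorem applies; the starting vector $\lambda$ enters only through a term that is uniformly $O(1)$ and hence negligible after division by $n$. Assumption \ref{eq:intble} makes the true predictive log-density integrable, so that $\ell(\Gamma_0,f_{1,0},\ldots,f_{K,0})=\E\{\log p_\infty(\Gamma_0,f_{1,0},\ldots,f_{K,0})\}$ is finite, while Assumption \ref{eq:intpospart} controls the positive part via $\{\log p_t\}^+\le \sum_{k}\{\log f_k(Y_1)\}^+$, so that $\ell(\Gamma,f_1,\ldots,f_K)$ is well defined in $[-\infty,\infty)$ and independent of the positive vector $\lambda$. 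This yields the almost sure convergence of the normalized difference to $\ell(\Gamma,f_1,\ldots,f_K)-\ell(\Gamma_0,f_{1,0},\ldots,f_{K,0})=:-K\{(\Gamma_0,f_{1,0},\ldots,f_{K,0}),(\Gamma,f_1,\ldots,f_K)\}$.

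Next I would identify $K\{\cdot,\cdot\}$ as the relative entropy rate between the two observation processes, $K=\lim_n n^{-1}\E\{\log(g_n(Y_1^n;\lambda_0,\Gamma_0,f_{1,0},\ldots,f_{K,0})/g_n(Y_1^n;\lambda,\Gamma,f_1,\ldots,f_K))\}$, the discrepancy between $\lambda_0$ and the stationary vector contributing only a vanishing $O(1)$ term. Equivalently, $K$ is the expectation of a conditional Kullback--Leibler divergence between the two infinite-past predictive densities, hence $K\ge 0$, and the limit is $\le 0$ by the information inequality; under Assumption \ref{eq:intble} it takes values in $(-\infty,0]$, degenerating to $-\infty$ only when the model's predictive log-density fails to be integrable under the truth.

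For the characterization of equality, the easy direction is immediate: if the two parameter sets agree up to label swapping then the two observation processes are identical, so $K=0$. For the converse I would use that $K=0$ forces the one-step predictive densities given the infinite past to coincide $P_0$-almost surely, and then propagate this, through the forgetting of the HMM filter guaranteed by the ergodicity of $\Gamma_0$, to equality of all finite-dimensional distributions of $(Y_t)$ under the two parameter sets; in particular the joint law of $(Y_1,\ldots,Y_{2K+1})$ must coincide. Since the truth satisfies Assumptions \ref{eq:tpm}--\ref{eq:stationary}, Theorem \ref{cor:identagainstgeneralparstat} then applies and delivers $\Gamma=\Gamma_0$ together with $f_k=f_{k,0}$ ($\nu$-almost everywhere) up to a common permutation, completing the equivalence.

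The main obstacle is exactly this last ``only if'' step: moving from a vanishing relative entropy \emph{rate} to equality of the finite-dimensional laws, rather than settling for mere asymptotic agreement of predictions. The difficulty is the familiar mismatch between the conditional divergence (truth versus model, both evaluated along true data) and the marginal divergence under the model's own law, which cannot be bridged by elementary sub- or super-additivity of the finite-dimensional divergences; closing it requires the stability and exponential forgetting of the HMM filter under ergodicity of $\Gamma_0$. A secondary technical point is making the infinite-past approximation in the first step rigorous for a nonparametric density class under only the one-sided integrability of Assumption \ref{eq:intpospart}, where a subadditive-ergodic (Kingman) argument in Leroux's spirit is the natural tool.
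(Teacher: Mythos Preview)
Your treatment of the convergence part is essentially the paper's: the existence of the limit, its independence from the strictly positive starting vectors $\lambda,\lambda_0$, and the one-sided integrability under Assumptions \ref{eq:intble} and \ref{eq:intpospart} are all handled, as you indicate, by Leroux's analysis (the paper simply cites Kingman's subadditive ergodic theorem as in \citet{ler} rather than spelling out the infinite-past predictive approximation).

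The definiteness argument, however, is where the paper takes a different and considerably shorter route than the one you sketch. Instead of trying to pass from $K=0$ to equality of the finite-dimensional marginals via filter forgetting and then invoking Theorem~\ref{cor:identagainstgeneralparstat}, the paper uses a second representation due to \citet{ler}: for every $T\ge 1$,
\[
T\,K\{(\Gamma_0,f_{1,0},\ldots,f_{K,0}),(\Gamma,f_1,\ldots,f_K)\}=\int_{\Delta^{K-1}\times\Delta^{K-1}}\mathrm{KL}\bigl(g_T(\cdot;u,\Gamma_0,f_{\cdot,0})\,\big\|\,g_T(\cdot;v,\Gamma,f_\cdot)\bigr)\,dQ(u,v),
\]
where $Q$ is the stationary law of the pair of filter states $(u^{(n)},v^{(n)})$ and the integrand is an ordinary Kullback--Leibler divergence between two length-$T$ hidden Markov densities with \emph{arbitrary} initial distributions $u$ and $v$. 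Taking $T=(2K+1)(K^2-2K+2)+1$ and applying Theorem~\ref{the:identgeneral} (not Theorem~\ref{cor:identagainstgeneralparstat}) shows that if the two parameter sets are not equal up to label swapping then the integrand is strictly positive for \emph{every} pair $(u,v)\in\Delta^{K-1}\times\Delta^{K-1}$, hence $K>0$.

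Two points are worth flagging. First, the identification result you plan to use, Theorem~\ref{cor:identagainstgeneralparstat}, requires the true parameters to be started stationarily; but in the Leroux representation the relevant ``starting vector'' on the true side is the random filter state $u$, which is not $\pi_0$ in general. This is precisely why the paper develops and invokes Theorem~\ref{the:identgeneral}, which allows arbitrary starting distributions on both sides. Second, the integral representation dissolves the obstacle you isolate: there is no need to upgrade rate-zero to equality of marginal laws, because $T\,K$ is already displayed as an average of genuine finite-dimensional divergences. Your proposed route through exponential forgetting may be workable, but even if it succeeds it will in effect land you at $g_T(\cdot;u,\Gamma_0,\ldots)=g_T(\cdot;v,\Gamma,\ldots)$ for $Q$-typical $(u,v)$, at which point you still need Theorem~\ref{the:identgeneral} rather than Theorem~\ref{cor:identagainstgeneralparstat}.
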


\begin{proof}
%
The existence of the limit and its independence from the starting distributions may be deduced from Kingman's subadditive ergodic theorem as shown in \citet{ler}. To show definiteness, we briefly recall a construction from \citet{ler}. For a sequence $(y_n)$ in $\cS$,
define sequences 
%
\[ u^{(n)}, v^{(n)} \in {\Delta}^{K-1} = \big\{(s_1, \ldots, s_K)' \in [0,1]^K:\quad s_1 + \ldots + s_K=1 \big\},\]
%
by
\begin{equation*}
	\begin{aligned}
	u_k^{(1)} &= \pi_{0k}, \quad u_k^{(n+1)} = \frac{\sum_{j=1}^K u_j^{n} f_{0j}(y_n) \alpha_{0,jk} }{\sum_{j=1}^K u_j^{n} f_{0j}(y_n)} \quad (k=1,\ldots,K; \;\; n=1,2\ldots)\\
	v_k^{(1)} &= \pi_{0k}, \quad v_k^{(n+1)} = \frac{\sum_{j=1}^K v_j^{n} f_j(y_n) \alpha_{j,k} }{\sum_{j=1}^K v_j^{n} f_j(y_n)} \quad (k=1,\ldots,K; \;\; n=1,2\ldots),
	\end{aligned}
\end{equation*}
where $\pi_0$ is the stationary distribution of $\Gamma_0$, and we set $0/0=0$.  Let $\Omega = \{(y_n, u^{(n)}, v^{(n)})_{n\in\N}\}$. \citet{ler} shows that there is a probability measure on $\Omega$, such that if $Q(u,v)$ denotes the distribution of $\big(u^{(1)}, v^{(1)}\big)$ under this measure, for any $T \in \N$ we have that
%
\begin{align*}
%
 T\, K\{(\Gamma_0, f_{1,0}, \ldots, f_{K,0}),(\Gamma, f_1, \ldots, f_K) \}
%
=  \int \limits_{\Delta^{K-1} \times {\Delta}^{K-1}} \int\limits_{\cS^T} g_T(y_1^T;u, \Gamma_0, f_{1,0}, \ldots, f_{K,0})&\\
	  \cdot\log \Big\{\frac{g_T(y_1^T;u, \Gamma_0, f_{1,0}, \ldots, f_{K,0})}{g_T(y_1^T;v, \Gamma, f_{1}, \ldots, f_{K})} \Big\}\, d\nu^{\otimes T}(y_1^T)\, dQ(u,v).&
%
\end{align*}
%
Since the inner integral is an ordinary Kullback--Leibler divergence of two densities, non-negativity is obvious. To show definiteness, choose $T = (2 K+1) (K^2-2K+2) + 1$. Suppose that the two sets of parameters $\Gamma_0, f_{1,0}, \ldots, f_{K,0}$ and $\Gamma, f_1, \ldots, f_K  $ are not equal up to label swapping. Then from Theorem \ref{the:identgeneral}, for any $u,v \in \Delta^{K-1}$, this Kullback--Leibler divergence
%
\[ \int\limits_{\cS^T} g_T(y_1^T;u, \Gamma_0, f_{1,0}, \ldots, f_{K,0})
	\log \Big\{\frac{g_T(y_1^T;u, \Gamma_0, f_{1,0}, \ldots, f_{K,0})}{g_T(y_1^T;v, \Gamma, f_{1}, \ldots, f_{K})} \Big\}\, d\nu^{\otimes T}(y_1^T) >0,\]
%
which implies definiteness. 
\end{proof}
%
%
Suppose that $(f_{\vartheta})_{\vartheta\in \Theta}$ is a parametric family of densities on $\cS$ with respect to some $\sigma$-finite measure, where $\Theta \subset \R^d$ is compact. Let $\tilde{\Theta}$ be the set of Borel probability measures on $\Theta$, endowed with the weak topology it is also a compact set. Assume that the map $(y,\vartheta) \mapsto f_{\vartheta}(y)$ is continuous on $\cS\times \Theta$. Given $\mu \in \tilde{\Theta}$, we let 
%
\[ f_\mu(y) = \int_\Theta f_{\vartheta}(y)\, d \mu(\vartheta)\]
%
denote the corresponding mixture density. Let 
%
$ \theta = \big(\Gamma, \mu_1, \ldots, \mu_K) \in G \times \tilde{\Theta} \times  \cdots \times \tilde{\Theta}$, 
%
where $G$ is the compact set of $K$-state transition probability matrices. Given the sequence of observations $Y_1, \ldots, Y_n$, the log-likelihood function is 
%
\[ L_n(\theta) = \log\, \big\{ \sum \limits_{x_1 = 1}^{K} \cdots \sum \limits_{x_n = 1}^{K}  \lambda_{x_1} f_{\mu_{x_1}}(Y_1) \prod \limits_{i=2}^n \alpha_{x_{i-1}, \; x_i} f_{\mu_{x_i}}(Y_i)\, \big\},\]
%
where $\lambda$ is an arbitrary $K$-state strictly positive probability vector.
%
\begin{theorem}\label{prop:existencemle}
%
Let $(f_\vartheta)_{\vartheta\in\Theta}$ be a parametric family of densities with $\Theta\subset\mathbb{R}^d$ compact, and let $\mathcal{D}=\{f_\mu:~\mu\in\tilde{\Theta}\}$ be the model for the state-dependent densities, where $\tilde{\Theta}$ is the set of Borel probability measures on $\Theta$. Then, for any $n \geq 1$ there exists a maximum likelihood estimator $\hat \theta_n = (\hat \Gamma_n,\hat \mu_{1,n},\ldots,\hat \mu_{K,n})$, for which the state-dependent mixing distributions are of the form
%
\[ \hat \mu_{k,n} = \sum_{j=1}^m a_j\, \delta_{\vartheta_{j,k}} \quad (k=1, \ldots, K),\]
%
where $m\in \{1,\hdots,Kn+1\}$, $a_j>0$, $\sum_{j=1}^m a_j=1$, 
$\vartheta_{j,k}\in \Theta$ ($j=1, \ldots, m$) and where $\delta_\vartheta$ is the point-mass at $\vartheta$. 
%
\end{theorem}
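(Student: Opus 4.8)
The plan is to split the statement into two parts: first, the existence of \emph{some} maximizer of $L_n$ over the parameter space, and second, the fact that a maximizer can always be replaced by one whose state-dependent mixing distributions have the stated finitely supported form. For existence, I would observe that the parameter space $G \times \tilde\Theta \times \cdots \times \tilde\Theta$ is compact, since $G$ is a product of simplices and $\tilde\Theta$ is weakly compact because $\Theta$ is compact, as noted in the text. It then suffices to check that $\theta \mapsto L_n(\theta)$ is upper semicontinuous. The key point is that for each fixed observation $Y_i$ the map $\mu \mapsto f_\mu(Y_i) = \int_\Theta f_\vartheta(Y_i)\,d\mu(\vartheta)$ is continuous for the weak topology, because $\vartheta \mapsto f_\vartheta(Y_i)$ is continuous and hence bounded on the compact set $\Theta$. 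Since the inner sum $g_n$ is a polynomial in the entries $\alpha_{j,k}$ and in the values $f_{\mu_k}(Y_i)$, it is continuous and nonnegative on the parameter space; composing with $\log$ gives an upper semicontinuous function, which therefore attains its maximum on the compact domain. Denote a maximizer by $(\hat\Gamma, \hat\mu_1, \ldots, \hat\mu_K)$.

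The crucial structural observation for the second part is that $g_n$, and hence $L_n$, depends on the tuple $(\mu_1, \ldots, \mu_K)$ only through the finitely many numbers $f_{\mu_k}(Y_i)$ for $k=1,\ldots,K$ and $i=1,\ldots,n$. I would encode these in the continuous map $\Psi \colon \Theta^K \to \R^{Kn}$ defined by $\Psi(\vartheta_1, \ldots, \vartheta_K) = \big(f_{\vartheta_k}(Y_i)\big)_{k,i}$, and consider the product measure $\rho_0 = \hat\mu_1 \otimes \cdots \otimes \hat\mu_K$ on $\Theta^K$. The $(k,i)$-component of $\int_{\Theta^K} \Psi \, d\rho_0$ is exactly $f_{\hat\mu_k}(Y_i)$, so this vector is the barycenter of the pushforward $\Psi_* \rho_0$ and therefore lies in the convex hull of the compact set $\Psi(\Theta^K) \subset \R^{Kn}$, which is itself compact and hence closed. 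By Carathéodory's theorem this vector can be written as a convex combination $\sum_{j=1}^m a_j\, \Psi(\vartheta_{j,1}, \ldots, \vartheta_{j,K})$ of at most $m \le Kn+1$ image points, with $a_j > 0$ and $\sum_{j=1}^m a_j = 1$ once zero weights are discarded.

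Finally, I would set $\hat\mu_{k,n} = \sum_{j=1}^m a_j\, \delta_{\vartheta_{j,k}}$, namely the $k$-th marginal of the discrete measure $\sum_{j=1}^m a_j\, \delta_{(\vartheta_{j,1}, \ldots, \vartheta_{j,K})}$ on $\Theta^K$; this is precisely the asserted form, with shared weights $a_j$ and atoms $\vartheta_{j,k} \in \Theta$. By construction $f_{\hat\mu_{k,n}}(Y_i) = \sum_{j=1}^m a_j\, f_{\vartheta_{j,k}}(Y_i) = f_{\hat\mu_k}(Y_i)$ for all $k$ and $i$, so the likelihood is unchanged and $(\hat\Gamma, \hat\mu_{1,n}, \ldots, \hat\mu_{K,n})$ is again a maximizer.

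I expect the main difficulty to lie not in any single computation but in the setup: recognizing that the likelihood factors through the $Kn$-dimensional vector of density values, and choosing the joint product measure on $\Theta^K$ so that an application of Carathéodory's theorem in $\R^{Kn}$ produces marginals sharing the same weights $a_j$ and the sharp bound $Kn+1$. A minor technical point to treat with care is the upper semicontinuity of $L_n$ at parameters where $g_n$ vanishes and $L_n = -\infty$, but this does not obstruct attainment of the maximum on the compact parameter space.
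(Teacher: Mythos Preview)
Your proof is correct and shares the paper's core strategy: the likelihood depends on $(\mu_1,\ldots,\mu_K)$ only through the $Kn$ numbers $f_{\mu_k}(Y_i)$, so one works in $\R^{Kn}$ and applies Carath\'eodory's theorem to obtain at most $Kn+1$ atoms with shared weights. The execution differs in one respect worth noting. The paper defines the affine map $\Psi$ on $\tilde\Theta^K$, maximizes directly over the compact convex image $D\subset\R^{Kn}$, and then invokes extreme-point theory---extreme points of $D$ are images of extreme points of $\tilde\Theta^K$, which are tuples of Dirac masses---citing \citet{Simon2011}. You instead first produce a maximizer $(\hat\mu_1,\ldots,\hat\mu_K)$ by upper semicontinuity, form the product measure $\hat\mu_1\otimes\cdots\otimes\hat\mu_K$ on $\Theta^K$, and observe that the relevant density-value vector is the barycenter of its pushforward, hence lies in the (closed, compact) convex hull of the non-convex set $\Psi(\Theta^K)$; Carath\'eodory then applies directly with generators in $\Psi(\Theta^K)$. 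Your barycenter route is more elementary in that it bypasses the identification of extreme points of $\tilde\Theta$ and of affine images; the paper's route is slightly more streamlined in combining existence and the discrete representation in one step on the image $D$. Both yield the same bound $Kn+1$ and the same shared-weight structure.
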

%

\begin{proof}
%
Since by assumption, $\Theta$ is compact and $f_\cdot(y)$ is continuous for any $y \in \cS$, if $\mu_n \to \mu$ weakly then $\int_{\Theta} f_\vartheta(y)\mu_n(d\vartheta) \rightarrow \int_\Theta f_\vartheta(y)\mu(d\vartheta)$. 
%
Therefore, the map 
%
\begin{align*}
\Psi: \tilde{\Theta}\times \cdots \times \tilde{\Theta} &\longrightarrow  \mathbb{R}^n \times \cdots \times \mathbb{R}^n \\
(\mu_1,\hdots,\mu_K)& \longmapsto \big[\{f_{\mu_1}(y_t)\}_{t=1, \ldots, n},\hdots,\{f_{\mu_K}(y_t)\}_{t=1, \ldots, n}\big],
\end{align*}
%
is affine and continuous. Since $\tilde \Theta$ is compact as well, the image 
%
\begin{equation*}
	D = \Psi( \tilde{\Theta}\times \cdots \times \tilde{\Theta}) \subset \mathbb{R}^{Kn}
\end{equation*}
%
is compact and convex. For fixed $\Gamma = \big(\alpha_{j,k}\big)_{j,k=1, \ldots, K}$, we need to consider maximizing 
%
\[ \tilde L_n\big(t_1, \ldots, t_K \big) = \log\, \Big(\sum_{x_1=1}^K\cdots\sum_{x_n=1}^K \lambda_{x_1}t_{x_1,1} \,\prod_{j=2}^n \alpha_{x_{j-1} x_j} t_{x_j,j}\Big)\]
%
over $D$, where $t_k = (t_{k,1}, \ldots, t_{k,n})$ and $z= [t_1, \ldots, t_K] \in D$. 
%
Since $D$ is compact and $\tilde L_n$ is continuous, there exists $z^*=(t_1^*,\hdots,t_K^*) \in D$, $t_i^* \in \mathbb{R}^n$ ($i=1,\hdots,K$) where $\tilde L_n$ is maximal. Since $D$ is also convex, according to Carath\'eodory's theorem $z^*$ can be expressed by a convex combination of at most $Kn+1$ extreme points $s_j^* \in D$, so that 
%
\begin{equation}\label{caratheodory}
z^*= \sum_{j=1}^{Kn+1} a_j s_j^*,
\end{equation}
%
where the weights $a_j\geq 0$ sum to one. The $s_j^*$ are images of extreme points in $\tilde{\Theta}\times\cdots\times\tilde{\Theta}$ under the affine map $\Psi$ \citep[see][]{Simon2011}. Further, points in the Cartesian product $\tilde{\Theta}\times\cdots\times\tilde{\Theta}$ are extreme if and only if all coordinates are extreme in $\tilde \Theta$, and the extreme points in  $\tilde \Theta$ are given by the point masses $\delta_\vartheta$ for a $\vartheta \in \Theta$. Therefore, there exist $\vartheta_{k,j} \in \Theta$ ($k=1, \ldots, K$; \ $j=1, \ldots, nK+1$) such that 
%
\[ \Psi\big(\delta_{\vartheta_{1,j}}, \ldots , \delta_{\vartheta_{K,j}} \big) = s_j^*.\]
%
%
%
Let $m \in \{1,\hdots,Kn+1\}$ denote the number of extreme points needed in the convex combination \eqref{caratheodory} for which $a_j>0$. Then, after relabeling we obtain
%
\begin{align*}
z^*&= \sum_{j=1}^m a_j s_j^*
	  = \sum_{j=1}^m a_j \Psi\big(\delta_{\vartheta_{1,j}}, \ldots , \delta_{\vartheta_{K,j}} \big) 
    = \Psi\Big(\sum_{j=1}^m a_j \delta_{\vartheta_{1,j}}, \ldots, \sum_{j=1}^m a_j \delta_{\vartheta_{K,j}}\Big).
\end{align*}
%
%
Since 
%
\[
\sup_{(\Gamma,\mu_1,\hdots,\mu_K)}L_n(\theta) = \sup_{\Gamma} \sup_{\mu_1,\hdots,\mu_K} L_n(\theta)
\]
%
the theorem follows. 
%
\end{proof}
%
%
%
Assume that the true state-dependent densities $f_{k,0} = f_{\mu_{k,0}}$ belong to the model and are all distinct, and that $\Gamma_0 \in G$ satisfies Assumption \ref{eq:tpm}. 
%
\begin{assumption} \label{ass_finiteexp}
For every $\mu \in \tilde{\Theta}$ and a small enough neighborhood $O_{\mu}$ of $\mu$ we have
\begin{equation*}
E \big[\sup_{\tilde \mu\in O_\mu}\{\log f_{\tilde \mu}(Y_1)\}^{+}\big] < \infty.
\end{equation*}
\end{assumption}
%
%
\begin{theorem}\label{th:consistency}
%
Let $(f_{\vartheta})_{\vartheta\in \Theta}$ be a parametric family of densities with $\Theta \subset \R^d$ compact, and let $\mathcal D = \{f_\mu:\, \mu \in \tilde \Theta\}$ be the model for the state-dependent densities, where $\tilde{\Theta}$ is the set of Borel probability measures on $\Theta$.
Suppose that Assumptions \ref{eq:tpm}, \ref{eq:stationary}, \ref{eq:intble} and \ref{ass_finiteexp} hold and let  $\hat \theta_n = (\hat \Gamma_n,\hat \mu_{1,n},\ldots,\hat \mu_{K,n})$ denote a maximum likelihood estimator. Then, after relabeling, we have in probability as $n \to \infty$ that $\hat \Gamma_n \to \Gamma_0$ and for any $y \in \cS$ and $k \in \{1, \ldots, K\}$ that
%
\[ f_{\hat \mu_{k,n}}(y) \to f_{k,0}(y).\]
%
Furthermore, if the mixing distribution $\mu$ is identified from the mixture density $f_\mu$, then we have additionally that $d_w\big(\hat \mu_{k,n},\mu_{k,0}\big) \to 0$ in probability, where $d_w$ is a distance which metrizes weak convergence in $\tilde \Theta$.  
%
\end{theorem}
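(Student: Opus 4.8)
The plan is to establish consistency of the maximum likelihood estimator by combining the identifiability result of Theorem~\ref{the:identgeneral}, the contrast-function characterization of Theorem~\ref{the:mlcontrast}, a uniform law of large numbers, and a compactness argument on the parameter space $G \times \tilde\Theta \times \cdots \times \tilde\Theta$. The key observation is that Theorem~\ref{the:mlcontrast} already shows that the normalized log-likelihood difference converges to the negative of a Kullback--Leibler type contrast $K\{\cdot,\cdot\}$, which is nonnegative and vanishes \emph{only} at the true parameter up to label swapping. Consistency should then follow by a standard M-estimation argument, provided the convergence can be made uniform over the compact parameter space.

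\medskip

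\emph{Step 1 (Uniform ergodic limit).} First I would upgrade the pointwise almost-sure convergence of $n^{-1} L_n(\theta)$, furnished by the subadditive ergodic theorem of \citet{ler}, to a convergence that holds uniformly over $\theta$, or at least to the upper-semicontinuity bounds needed for the argmax argument. Concretely, for each $\theta = (\Gamma,\mu_1,\ldots,\mu_K)$ one has $n^{-1}\{L_n(\theta) - L_n(\theta_0)\} \to -K(\theta_0,\theta) \le 0$. Using the neighborhood integrability from Assumption~\ref{ass_finiteexp} together with the continuity of $(y,\vartheta)\mapsto f_\vartheta(y)$ and the affine dependence of $f_\mu$ on $\mu$, I would control $\limsup_n n^{-1} L_n$ on small weak-topology neighborhoods $O_\theta$ via a bound of the form $\limsup_n \sup_{\theta'\in O_\theta} n^{-1}\{L_n(\theta')-L_n(\theta_0)\} \le -K(\theta_0,\theta) + \varepsilon$. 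The positive-part integrability in Assumption~\ref{ass_finiteexp} is what makes $\E[\sup_{\theta'\in O_\theta}\log(\cdots)]$ finite so that the ergodic theorem applies to the supremum.

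\medskip

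\emph{Step 2 (Compactness and covering).} Fix any open neighborhood $U$ of the true parameter (in the product topology, after identifying parameters up to label swapping). The complement $(G\times\tilde\Theta^K)\setminus U$ is compact since $G$ is compact and $\tilde\Theta$ is weakly compact. By Theorem~\ref{the:identgeneral} and the definiteness in Theorem~\ref{the:mlcontrast}, every $\theta$ in this complement has $K(\theta_0,\theta)>0$, so each such $\theta$ admits a neighborhood $O_\theta$ on which the $\limsup$ bound of Step~1 is strictly negative. Extracting a finite subcover, I would conclude that $\limsup_n \sup_{\theta\notin U} n^{-1}\{L_n(\theta)-L_n(\theta_0)\} < 0$ almost surely, while $n^{-1}\{L_n(\hat\theta_n)-L_n(\theta_0)\}\ge 0$ by definition of the maximizer. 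Hence $\hat\theta_n\in U$ eventually, which yields $\hat\Gamma_n\to\Gamma_0$ and $f_{\hat\mu_{k,n}}(y)\to f_{k,0}(y)$ for every $y$ after relabeling. The final claim about $d_w(\hat\mu_{k,n},\mu_{k,0})\to 0$ follows immediately once $\mu\mapsto f_\mu$ is injective: then convergence of the mixture densities forces weak convergence of the mixing measures by compactness of $\tilde\Theta$ and a subsequence argument.

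\medskip

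\emph{The hard part} will be making the convergence uniform enough to run the covering argument---that is, establishing the $\limsup$ bound over neighborhoods in Step~1 with the correct handling of the starting distribution $\lambda$. The delicate point is that the limit $K(\theta_0,\theta)$ is defined through the relative-entropy representation of \citet{ler} involving the prediction filters $(u^{(n)},v^{(n)})$, so one must verify that the supremum over a weak neighborhood $O_\theta$ interchanges appropriately with the ergodic limit; this is exactly where Assumption~\ref{ass_finiteexp} is used, to dominate the positive part of the log-likelihood increments uniformly over $O_\theta$. I would also need to confirm that label swapping is handled consistently so that ``after relabeling'' is well defined along the whole sequence, which is routine given that the number of states $K$ is finite and the true densities are distinct.
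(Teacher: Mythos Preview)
Your overall strategy---subadditive ergodic limit, upper-semicontinuity on weak neighborhoods via Assumption~\ref{ass_finiteexp}, compactness covering, then argmax---is essentially the route the paper takes, following \citet{ler}. There is, however, a genuine gap in how you set up Step~2.

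You take $U$ to be a neighborhood of \emph{the true parameter} $\theta_0=(\Gamma_0,\mu_{1,0},\ldots,\mu_{K,0})$ up to label swapping, and assert that $K(\theta_0,\theta)>0$ on the complement. But Theorem~\ref{the:mlcontrast} only gives $K(\theta_0,\theta)=0$ if and only if $\Gamma=\Gamma_0$ and the \emph{densities} $f_{\mu_k}=f_{k,0}$ coincide up to relabeling; it says nothing about the mixing distributions $\mu_k$ themselves. Unless $\mu\mapsto f_\mu$ is injective, the zero set of the contrast is not a single relabeling orbit but the larger set $\Lambda_0=\{\Gamma_0\}\times\tilde\Theta_{1,0}\times\cdots\times\tilde\Theta_{K,0}$, where $\tilde\Theta_{k,0}=\{\mu\in\tilde\Theta:f_\mu=f_{k,0}\}$. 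Your covering argument therefore cannot exclude $\hat\theta_n$ from landing far from $\theta_0$ in the weak topology while still having contrast near zero, and your deduction of $f_{\hat\mu_{k,n}}(y)\to f_{k,0}(y)$ from ``$\hat\theta_n\in U$ eventually'' is circular: it implicitly uses weak convergence of $\hat\mu_{k,n}$ to the point $\mu_{k,0}$, which is precisely what is \emph{not} available without identifiability of the mixing distribution.

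The paper fixes this in two moves. First, the covering argument is run on the complement of an $\varepsilon$-neighborhood of the \emph{set} $\Lambda_0$, yielding $d(\hat\theta_n,\Lambda_0)\to 0$ in probability; this already gives $\hat\Gamma_n\to\Gamma_0$. Second, a separate argument passes from $d_{BL}(\hat\mu_{k,n},\tilde\Theta_{k,0})\to 0$ to pointwise density convergence: the paper approximates the bounded uniformly continuous function $\vartheta\mapsto f_\vartheta(y)$ by a Lipschitz function (Lemma~\ref{lemma:lipschitzapprox}), so that closeness in $d_{BL}$ to \emph{any} element of $\tilde\Theta_{k,0}$ forces $f_{\hat\mu_{k,n}}(y)$ close to $f_{k,0}(y)$. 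Only after density convergence is established does injectivity of $\mu\mapsto f_\mu$ enter, collapsing $\tilde\Theta_{k,0}$ to $\{\mu_{k,0}\}$ and yielding $d_w(\hat\mu_{k,n},\mu_{k,0})\to 0$. Your final paragraph has this last implication right, but the order in which density convergence and measure convergence are obtained is reversed in your plan.
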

%
\begin{proof}
%
We let 
%
\[ \tilde \Theta_{k,0} = \big\{ \mu \in \tilde \Theta:\quad f_\mu = f_{\mu_{k,0}} \big\} \quad (k=1, \ldots, K).\]
%
Moreover, for the proof we set
%
\[ \Lambda = G \times \tilde{\Theta}\times \cdots \times \tilde{\Theta} \qquad \text{and} \qquad \Lambda_0 = \{\Gamma_0\} \times \tilde \Theta_{1,0} \times \cdots \times \tilde \Theta_{K,0}.\]
%
We shall metrize weak convergence on $\tilde \Theta$ using the bounded Lipschitz metric \citep{vandervaartwellner1996}
%
\begin{equation*}
 d_{BL}(\mu_1,\mu_2)= \sup\Big\{\big|\int f d\mu_1 - \int f d\mu_2\big|;~ f:\Theta \longrightarrow [0,1],~|f(\vartheta_1)-f(\vartheta_2)|\leq d(\vartheta_1,\vartheta_2)\Big\}.
\end{equation*}
%
On $G$ we take any metric equivalent to the Euclidean metric and on $\Lambda$, we take the product metric which we denote by $d$ . The proof consists of two steps.
%
\setcounter{step}{0}
\begin{step}\label{step1consistency}
Show that in probability as $n \to \infty$, 
%
\[ d\big(\hat \theta_n, \Lambda_0\big) \to 0,\]
%
which in particular implies $\hat \Gamma_n \to \Gamma_0$.
\end{step}
\begin{step}\label{step2consistency}
Show that from the convergence in probability
%
\[ d_{BL}\big(\hat \mu_{n,k}, \tilde \Theta_{k,0} \big) \to 0\]
%
it follows that for any $y \in \cS$, in probability
%
\[ f_{\hat \mu_{n,k}}(y) \to f_{k,0}(y).\] 
\end{step}
Consider first step \ref{step2consistency}. Since $\Theta$ is compact, the function $\vartheta \mapsto f_\vartheta(y)=g(\vartheta)$ is uniformly continuous and bounded. Therefore, given $\varepsilon$, from Lemma \ref{lemma:lipschitzapprox} below there is a Lipschitz-continous $h$ such that $|g(\vartheta) - h(\vartheta)| < \varepsilon$. Let $K_1(\varepsilon)= \sup_{\vartheta\in\Theta}|h(\vartheta)|$ and let  $K_2(\varepsilon)$ denote the Lipschitz-constant of $h$, and let $K(\varepsilon)=\max\{K_1(\varepsilon),K_2(\varepsilon)\}$. 

%
Given any $\mu \in \tilde \Theta$, there is a $\nu \in \tilde \Theta_{k,0}$ for which
%
$d_{BL}\big(\mu, \nu\big) \leq d_{BL}\big( \mu, \tilde \Theta_{k,0} \big) + \varepsilon/K(\varepsilon)$. From the definition of $\tilde \Theta_{k,0}$, 
%
\[ f_{k,0}(y) = \int g(\vartheta)d \nu(\vartheta).\]
%
Therefore, we may estimate
%
\begin{align*}
\big| f_{\mu}(y) -f_{k,0}(y)\big| &= \Big|\int g(\vartheta) d\mu(\vartheta)- \int g(\vartheta)d \nu(\vartheta)\Big| \\
&\leq \int \big|g(\vartheta)-h(\vartheta)\big|\, d\mu(\vartheta) + \Big|\int h(\vartheta)d \mu(\vartheta)  - \int h(\vartheta) d \nu(\vartheta)\Big| \\&\quad + \int \big| h(\vartheta) - g(\vartheta) \big|\, d\nu(\vartheta) \\
&\leq 2\, \varepsilon + K(\varepsilon)\, d_{BL}\big(\mu, \nu\big) 
\leq 3\, \varepsilon + K(\varepsilon)\, d_{BL}\big(\mu, \tilde \Theta_{k,0}\big). 
\end{align*}
%
%
Letting $\delta= \varepsilon/K(\varepsilon)$, we therefore obtain
%
\[ \text{pr}\,\{\big| f_{\hat \mu_{n,k}}(y) -f_{k,0}(y)\big| > 4 \varepsilon \} \leq \text{pr}\,\{ d_{BL}\big(\hat \mu_{n,k}, \tilde \Theta_{k,0}\big) > \delta \} \to 0.\]

For step \ref{step1consistency}.~we may follow the argument in \citet{ler}. For a parameter vector $\theta = \big(\Gamma, \mu_1, \ldots, \mu_K)$ set  
%
\begin{align*}
M_{s,t}(\theta)& = \max_{k\in\{1,\hdots,K\}} \, \big\{f_{\mu_{k}}(Y_{s+1}) \,  \sum \limits_{x_2 = 1}^{K} \cdots \sum \limits_{x_{t-s} = 1}^{K}  \, \alpha_{k,x_2}\, f_{\mu_{x_2}}(Y_{s+2})\,\\
& \qquad \quad  \prod \limits_{i=3}^{t-s} \alpha_{x_{i-1}, \; x_i} f_{\mu_{x_i}}(Y_{s+i})\, \big\} \quad (s,t \in \N_0; \ s < t). 
\end{align*}
%
From \citet[Lemma 3]{ler} we have 
%
\[ M_{s,t}(\theta)\leq M_{s,u}(\theta)\, M_{u,t}(\theta) \quad (s < u < t),\]
%
so that the process $\log M_{s,t}(\theta)$ is subadditive. From Kingmans' subadditive ergodic theorem, 
%
\[ n^{-1}\, \log\, M_{0,n}(\theta) \to H(\theta_0, \theta)\]
%
in $L_1$ and almost surely, and from the arguments in \citet{ler} and Theorem \ref{the:mlcontrast},  
%
\begin{align*}
 H(\theta_0, \theta) & \leq H(\theta_0, \theta_0), \qquad 
H(\theta_0, \theta)  = H(\theta_0, \theta_0) \ \Leftrightarrow \ \theta \in \Lambda_0.
\end{align*}
%
Similarly, for a $\theta \in \Theta$ and a neighborhood $O_\theta$ of $\theta$, the process 
$\log \sup_{\theta \in O_\theta} M_{s,t}(\theta)$ is subadditive as well, and the limit
%
\[ n^{-1} \, \log\, \sup_{\theta \in O_\theta} M_{0,n}(\theta) \to H(\theta_0, \theta, O_\theta).\]
%
The argument in \citet{ler} shows that there exists a  $\delta>0$, such that for every $\theta \in \Lambda \setminus \Lambda_0$ there is a neighborhood $O_\theta$ for which
%
\[ H(\theta_0, \theta, O_\theta) \leq H(\theta_0, \theta_0) - \delta.\]
%
Given $\varepsilon>0$ let $\Lambda_\varepsilon=\{\lambda\in \Lambda, ~d(\lambda,\Lambda_0)\geq \varepsilon\}$. Since $\Lambda$ is compact and the distance is continuous, $\Lambda_\varepsilon$ is compact. Therefore, we can find finitely many $O_{\theta_j}$ ($j=1, \ldots, m$), which cover $\Lambda_\varepsilon$. 
%
Since $ L_n(\theta)$ and $\log M_{0,n}(\theta)$ have the same asymptotics, we obtain
%
\begin{equation*}
n^{-1}\sup_{\theta\in \Lambda_\varepsilon} L_n(\theta) \leq n^{-1}\max_{j=1,\hdots,m}  \sup_{\theta\in O_{\theta_j}} L_n(\theta) \longrightarrow \max_{j=1,\hdots,m} H(\theta_0, \theta_j, O_{\theta_j}) \leq H(\theta_0,\theta_0)- \delta.
\end{equation*}
%
Since $n^{-1}\, L_n(\theta_0) \rightarrow H(\theta_0,\theta_0)$, and
%
\[ \big\{\hat \theta_n \in \Lambda_\varepsilon \big\} \subset \Big\{n^{-1}\sup_{\theta\in \Lambda_\varepsilon} L_n(\theta) \geq n^{-1}\, L_n(\theta_0)\Big\},\]
%
we obtain $\text{pr}\,\big(\hat \theta_n \in \Lambda_\varepsilon\big) \to 0$.
%
\end{proof}

%
\subsection{Proof of an additional lemma}

In the proof of Theorem \ref{th:consistency}, we used the following well-known lemma, for which, using argument in \citet{Garrido2008}, we provide a proof for convenience of the reader. 
%
\begin{lemma} \label{lemma:lipschitzapprox}
Let $(\Theta,d)$ be an arbitrary metric space, not necessarily compact. Every bounded and uniformly continuous function $g$ on $\Theta$ can be uniformly approximated by Lipschitz-continuous functions. 
\end{lemma}
%
\begin{proof}
If $g$ is bounded and uniformly continuous, then so are its positive and negative part. Therefore, we may assume that the given function $g \geq 0$. 
Choose $M>0$ such that $|g(\vartheta)|<M$ for all $\vartheta\in \Theta$, and given $\varepsilon>0$ let $N \in \N$ such that $(N+1)\varepsilon \geq M$. Define the sets 
%
\begin{equation*}
C_n=\{\vartheta \in \Theta:~(n-1)\varepsilon < g(\vartheta) < (n+1)\varepsilon\} \quad (n=0,1,\hdots,N), 
\end{equation*}
%
which cover $\Theta$. Evidently, $C_n \cap C_m = \emptyset$ for $|n-m|>1$. Using the uniform continuity of $g$, we may choose $\delta>0$ such that $|g(\eta) - g(\vartheta)| < \varepsilon/2$ whenever $d(\eta, \vartheta)< \delta$. 
First we show that for every $\vartheta \in \Theta$ there is a $m\in \{0,\hdots,N\}$ with 
%
\begin{equation}\label{eq:formerclaim}
 B_\delta(\vartheta)=\{\eta \in \Theta:~d(\vartheta,\eta)<\delta\}\subseteq C_m.
\end{equation}
%
For the proof, first observe that if $\vartheta$ is contained in just a single set $C_m$, we must have $g(\vartheta) = m \varepsilon$, and $B_\delta(\vartheta)\subseteq C_m$ is obvious by the choice of $\delta$ and the definition of $C_m$. Now suppose that $\vartheta \in C_n \cap C_{n+1}$ for some $n \in \{0, \ldots, N-1\}$, so that $n \varepsilon < g(\vartheta) < (n+1) \varepsilon$. If $n \varepsilon < g(\vartheta) \leq (n+1/2) \varepsilon$ we take  $m=n$, otherwise we take $m=n+1$, then (\ref{eq:formerclaim}) follows. 

Now define the functions
%
\begin{equation*}
g_n(\vartheta)= \inf\{1, d(\vartheta, \Theta \backslash C_n)\} \in [0,1], 
\end{equation*}
%
where $d(\vartheta, \emptyset)=\infty$. The $g_n$ are supported on $C_n$ and Lipschitz continuous with constant $1$, since for $\vartheta_1 \not= \vartheta_2$ 
%
\begin{equation*}
\frac{|g_n(\vartheta_1)-g_n(\vartheta_2)|}{d(\vartheta_1,\vartheta_2)}\leq \frac{|d(\vartheta_1,\Theta \backslash C_n)-d(\vartheta_2,\Theta\backslash C_n)|}{d(\vartheta_1,\vartheta_2)}\leq \frac{d(\vartheta_1,\vartheta_2)}{d(\vartheta_1,\vartheta_2)}=1.
\end{equation*} 
%

%
Define $h(\vartheta)= \sum_{n=0}^N g_n(\vartheta)$. From (\ref{eq:formerclaim}), we have that $h(\vartheta) \geq \delta$ for all $\vartheta \in \Theta$, and since each $\vartheta$ can at most be contained in two sets $C_n, C_{n+1}$, we have $h(\vartheta) \leq 2$. 
Further, for $\vartheta_1,\vartheta_2 \in \Theta$ we have
%
\begin{equation*}
|h(\vartheta_1)-h(\vartheta_2)|\leq \sum_{n=0}^N |g_n(\vartheta_1)-g_n(\vartheta_2)| \leq (N+1)d(\vartheta_1,\vartheta_2),
\end{equation*}
%
which proves that $h$ is a Lipschitz function with constant $(N+1)$.
%
Now, set $\tilde{h}(\vartheta)=h(\vartheta)^{-1}\sum_{n=0}^N n g_n(\vartheta)$. We shall show that $\tilde{h}$ is also Lipschitz continuous and that 
%
\begin{equation}\label{eq:approxprop}
 \sup_{\vartheta \in \Theta} \big|g(\vartheta) - \varepsilon \, \tilde h(\vartheta) \big| \leq 2\, \varepsilon.
\end{equation}
%
To this end, we compute
%
\begin{align*}\label{eq:consthtilde}
\begin{split}
|\tilde{h}(\vartheta_1)-\tilde{h}(\vartheta_2)|
&\leq \sum_{n=0}^N \Big|\frac{1}{h(\vartheta_1)}n g_n(\vartheta_1) - \frac{1}{h(\vartheta_2)}n g_n(\vartheta_2)\Big| \\
& \leq \sum_{n=0}^N \frac{ng_n(\vartheta_1) \big|h(\vartheta_2) - h(\vartheta_1) \big|}{h(\vartheta_1)h(\vartheta_2)} + \sum_{n=0}^N \frac{n \big|g_n(\vartheta_1) -g_n(\vartheta_2)\big|}{h(\vartheta_2)} \\
&\leq \big\{(N+1)^3 \, \delta^{-2}\,  + (N+1)^2 \delta^{-1}\big\}\, d(\vartheta_1, \vartheta_2),
\end{split}
\end{align*}
%
by the properties of $h$ and the $g_n$. As for (\ref{eq:approxprop}), suppose that 
%
$\vartheta\in C_m$. Then
%
\begin{align*}
|\varepsilon\, \tilde{h}(\vartheta)-g(\vartheta)|&\leq  \varepsilon\, \Big|\frac{(m-1)g_{m-1}(\vartheta)+mg_m(\vartheta)+(m+1)g_{m+1}(\vartheta)}{g_{m+1}(\vartheta)+g_m(\vartheta)+g_{m-1}(\vartheta)}-m\Big| + \big| \varepsilon\, m - g(\vartheta)\big|\\
& \leq \varepsilon\, \Big|\frac{g_{m-1}(\vartheta)-g_{m+1}(\vartheta)}{g_{m+1}(\vartheta)+g_m(\vartheta)+g_{m-1}(\vartheta)}\Big| + \varepsilon 
\leq 2 \varepsilon.
\end{align*}
%
%
\end{proof}
%
%
\section{Additional simulation results for different numbers of observations}
%
For the simulation scenario considered in section $4\cdot 1$ we report additional simulation results for several choices of the sample size $n$. Tables \ref{tab:relerr250}--\ref{tab:relerr5000} illustrate the consistency of the nonparametric maximum likelihood estimator. Further, one observes that for series shorter than 1000, the nonparametric maximum likelihood estimator is not superior to the misspecified parametric models in terms of the relative errors. 

%
\begin{table}[h]
\centering
\begin{tabular}{rrrrrrrrrr}
 $y$ & $-$15$\cdot$45 & $-$13$\cdot$77 & $-$11$\cdot$22 & $-$9$\cdot$05 & $-$7$\cdot$26 & $-$5$\cdot$3 & $-$2$\cdot$86 & $-$0$\cdot$21 & 1$\cdot$56 \\ 
nonpar & 143$\cdot$96 & 40$\cdot$54 & 12$\cdot$65 & 12$\cdot$54 & 22$\cdot$98 & 8$\cdot$22 & 42$\cdot$53 & 46$\cdot$63 & 52$\cdot$72 \\ 
2-comp & 148$\cdot$17 & 40$\cdot$93 & 11$\cdot$31 & 12$\cdot$19 & 24$\cdot$31 & 8$\cdot$15 & 43$\cdot$72 & 47$\cdot$76 & 55$\cdot$79 \\ 
Gauss & 162$\cdot$31 & 40$\cdot$59 & 8$\cdot$88 & 10$\cdot$75 & 22$\cdot$67 & 5$\cdot$99 & 41$\cdot$86 & 49$\cdot$97 & 51$\cdot$73 \\ 
\\
$y$    & $-$9$\cdot$36 & $-$6$\cdot$36 & $-$2$\cdot$71 & $-$0$\cdot$68 & 0$\cdot$5 & 1$\cdot$67 & 3$\cdot$71 & 7$\cdot$36 & 10$\cdot$36 \\ 
nonpar & 66$\cdot$14 & 34$\cdot$30 & 65$\cdot$12 & 12$\cdot$18 & 17$\cdot$28 & 21$\cdot$57 & 26$\cdot$74 & 68$\cdot$16 & 86$\cdot$82 \\ 
2-comp & 67$\cdot$78 & 35$\cdot$72 & 68$\cdot$21 & 11$\cdot$64 & 15$\cdot$90 & 20$\cdot$49 & 26$\cdot$90 & 69$\cdot$46 & 88$\cdot$15 \\ 
Gauss & 74$\cdot$89 & 32$\cdot$72 & 76$\cdot$71 & 9$\cdot$89 & 16$\cdot$80 & 21$\cdot$63 & 27$\cdot$35 & 77$\cdot$79 & 95$\cdot$70 \\ 
\\
$y$    & 2$\cdot$27 & 3$\cdot$74 & 6 & 7$\cdot$99 & 9$\cdot$66 & 11$\cdot$61 & 14$\cdot$93 & 20$\cdot$17 & 22 \\ 
nonpar & 1069$\cdot$26 & 157$\cdot$78 & 13$\cdot$36 & 20$\cdot$92 & 15$\cdot$64 & 10$\cdot$49 & 14$\cdot$01 & 39$\cdot$37 & 54$\cdot$23 \\ 
2-comp & 1090$\cdot$58 & 165$\cdot$61 & 11$\cdot$63 & 21$\cdot$49 & 14$\cdot$58 & 9$\cdot$57 & 14$\cdot$08 & 41$\cdot$38 & 55$\cdot$06 \\ 
Gauss & 1280$\cdot$50 & 207$\cdot$43 & 5$\cdot$60 & 25$\cdot$13 & 20$\cdot$07 & 8$\cdot$00 & 5$\cdot$33 & 35$\cdot$27 & 50$\cdot$02 \\ 
\end{tabular}
\caption{Relative errors ($\times100$) of the three estimators compared to the true densities at selected values for $y$ averaged over 10000 series of length 250. `Gauss' stands for Gaussian state-dependent distributions, `2-comp' for two component Gaussian mixtures and `nonpar' for nonparametric Gaussian mixtures. }
\label{tab:relerr250}
\end{table}
%
%
\begin{table}[b]
\centering
\begin{tabular}{rrrrrrrrrr}
$y$ & $-$15$\cdot$45 & $-$13$\cdot$77 & $-$11$\cdot$22 & $-$9$\cdot$05 & $-$7$\cdot$26 & $-$5$\cdot$3 & $-$2$\cdot$86 & $-$0$\cdot$21 & 1$\cdot$56 \\ 
nonpar & 125$\cdot$44 & 33$\cdot$44 & 9$\cdot$30 & 12$\cdot$27 & 23$\cdot$50 & 6$\cdot$31 & 43$\cdot$16 & 46$\cdot$72 & 45$\cdot$58 \\ 
2-comp & 130$\cdot$18 & 33$\cdot$87 & 8$\cdot$43 & 11$\cdot$98 & 24$\cdot$81 & 6$\cdot$28 & 44$\cdot$36 & 48$\cdot$08 & 49$\cdot$25 \\ 
Gauss & 146$\cdot$53 & 34$\cdot$89 & 7$\cdot$23 & 10$\cdot$50 & 23$\cdot$66 & 5$\cdot$24 & 42$\cdot$57 & 51$\cdot$15 & 41$\cdot$82 \\ 
\\
$y$    & $-$9$\cdot$36 & $-$6$\cdot$36 & $-$2$\cdot$71 & $-$0$\cdot$68 & 0$\cdot$5 & 1$\cdot$67 & 3$\cdot$71 & 7$\cdot$36 & 10$\cdot$36 \\ 
nonpar & 64$\cdot$66 & 27$\cdot$17 & 64$\cdot$75 & 10$\cdot$53 & 14$\cdot$94 & 20$\cdot$04 & 25$\cdot$38 & 64$\cdot$05 & 78$\cdot$08 \\ 
2-comp & 67$\cdot$96 & 28$\cdot$15 & 68$\cdot$57 & 10$\cdot$67 & 14$\cdot$55 & 19$\cdot$63 & 25$\cdot$58 & 65$\cdot$51 & 78$\cdot$84 \\ 
Gauss & 76$\cdot$33 & 23$\cdot$78 & 75$\cdot$83 & 9$\cdot$58 & 15$\cdot$77 & 20$\cdot$73 & 26$\cdot$07 & 80$\cdot$07 & 97$\cdot$33 \\ 
\\
$y$    & 2$\cdot$27 & 3$\cdot$74 & 6 & 7$\cdot$99 & 9$\cdot$66 & 11$\cdot$61 & 14$\cdot$93 & 20$\cdot$17 & 22 \\ 
nonpar & 1075$\cdot$62 & 161$\cdot$61 & 11$\cdot$29 & 20$\cdot$00 & 14$\cdot$12 & 7$\cdot$96 & 9$\cdot$78 & 35$\cdot$44 & 50$\cdot$19 \\ 
2-comp & 1093$\cdot$36 & 170$\cdot$46 & 9$\cdot$72 & 21$\cdot$91 & 14$\cdot$38 & 7$\cdot$26 & 9$\cdot$90 & 38$\cdot$64 & 51$\cdot$30 \\ 
Gauss & 1255$\cdot$34 & 204$\cdot$92 & 5$\cdot$11 & 24$\cdot$59 & 19$\cdot$35 & 7$\cdot$24 & 4$\cdot$16 & 34$\cdot$04 & 50$\cdot$41 \\ 
\end{tabular}
\caption{Relative errors ($\times100$) of the three estimators compared to the true densities at selected values for $y$ averaged over 10000 series of length 500. `Gauss' stands for Gaussian state-dependent distributions, `2-comp' for two component Gaussian mixtures and `nonpar' for nonparametric Gaussian mixtures. }
\label{tab:relerr500}
\end{table}
%
%
\begin{table}[t]
\centering
\begin{tabular}{rrrrrrrrrr}
$y$& $-$15$\cdot$45 & $-$13$\cdot$77 & $-$11$\cdot$22 & $-$9$\cdot$05 & $-$7$\cdot$26 & $-$5$\cdot$3 & $-$2$\cdot$86 & $-$0$\cdot$21 & 1$\cdot$56 \\ 
nonpar & 116$\cdot$05 & 30$\cdot$24 & 7$\cdot$85 & 12$\cdot$60 & 23$\cdot$79 & 5$\cdot$55 & 43$\cdot$60 & 46$\cdot$83 & 44$\cdot$15 \\ 
2-comp & 122$\cdot$20 & 30$\cdot$72 & 7$\cdot$08 & 12$\cdot$25 & 25$\cdot$06 & 5$\cdot$44 & 44$\cdot$81 & 48$\cdot$33 & 48$\cdot$88 \\ 
Gauss & 140$\cdot$12 & 32$\cdot$65 & 6$\cdot$40 & 10$\cdot$60 & 24$\cdot$13 & 4$\cdot$89 & 42$\cdot$98 & 52$\cdot$30 & 38$\cdot$85 \\ 
  \\
$y$    & $-$9$\cdot$36 & $-$6$\cdot$36 & $-$2$\cdot$71 & $-$0$\cdot$68 & 0$\cdot$5 & 1$\cdot$67 & 3$\cdot$71 & 7$\cdot$36 & 10$\cdot$36 \\ 
nonpar & 65$\cdot$00 & 23$\cdot$96 & 64$\cdot$76 & 9$\cdot$97 & 13$\cdot$96 & 19$\cdot$50 & 25$\cdot$26 & 61$\cdot$50 & 72$\cdot$15 \\ 
2-comp & 68$\cdot$85 & 24$\cdot$75 & 68$\cdot$75 & 10$\cdot$66 & 14$\cdot$07 & 19$\cdot$51 & 25$\cdot$35 & 63$\cdot$35 & 72$\cdot$18 \\ 
Gauss & 78$\cdot$40 & 19$\cdot$08 & 75$\cdot$38 & 9$\cdot$60 & 15$\cdot$31 & 20$\cdot$32 & 25$\cdot$67 & 81$\cdot$27 & 97$\cdot$87 \\ 
\\
   $y$ & 2$\cdot$27 & 3$\cdot$74 & 6 & 7$\cdot$99 & 9$\cdot$66 & 11$\cdot$61 & 14$\cdot$93 & 20$\cdot$17 & 22 \\ 
nonpar & 1082$\cdot$44 & 164$\cdot$38 & 10$\cdot$39 & 20$\cdot$06 & 13$\cdot$86 & 6$\cdot$90 & 8$\cdot$18 & 33$\cdot$98 & 48$\cdot$76 \\ 
2-comp & 1098$\cdot$02 & 173$\cdot$48 & 8$\cdot$87 & 22$\cdot$28 & 14$\cdot$67 & 6$\cdot$38 & 8$\cdot$05 & 37$\cdot$87 & 50$\cdot$42 \\ 
Gauss & 1242$\cdot$02 & 203$\cdot$51 & 4$\cdot$89 & 24$\cdot$31 & 18$\cdot$97 & 6$\cdot$86 & 3$\cdot$51 & 34$\cdot$35 & 51$\cdot$69 \\ 
\end{tabular}
\caption{Relative errors ($\times100$) of the three estimators compared to the true densities at selected values for $y$ averaged over 10000 series of length 750. `Gauss' stands for Gaussian state-dependent distributions, `2-comp' for two component Gaussian mixtures and `nonpar' for nonparametric Gaussian mixtures. }
\label{tab:relerr750}
\end{table}
%
\begin{table}[b]
\centering
\begin{tabular}{rrrrrrrrrr}
$y$& $-$15$\cdot$45 & $-$13$\cdot$77 & $-$11$\cdot$22 & $-$9$\cdot$05 & $-$7$\cdot$26 & $-$5$\cdot$3 & $-$2$\cdot$86 & $-$0$\cdot$21 & 1$\cdot$56 \\ 
nonpar & 93$\cdot$48 & 22$\cdot$08 & 5$\cdot$20 & 13$\cdot$91 & 23$\cdot$87 & 4$\cdot$15 & 44$\cdot$15 & 43$\cdot$74 & 42$\cdot$45 \\ 
2-comp & 107$\cdot$55 & 24$\cdot$15 & 4$\cdot$55 & 13$\cdot$14 & 25$\cdot$37 & 3$\cdot$70 & 45$\cdot$70 & 45$\cdot$71 & 52$\cdot$13 \\ 
Gauss & 131$\cdot$48 & 29$\cdot$27 & 4$\cdot$63 & 10$\cdot$96 & 24$\cdot$73 & 4$\cdot$26 & 43$\cdot$53 & 54$\cdot$12 & 36$\cdot$54 \\ 
\\
$y$    & $-$9$\cdot$36 & $-$6$\cdot$36 & $-$2$\cdot$71 & $-$0$\cdot$68 & 0$\cdot$5 & 1$\cdot$67 & 3$\cdot$71 & 7$\cdot$36 & 10$\cdot$36 \\ 
nonpar & 64$\cdot$26 & 18$\cdot$72 & 65$\cdot$32 & 9$\cdot$64 & 12$\cdot$73 & 19$\cdot$32 & 25$\cdot$16 & 55$\cdot$36 & 57$\cdot$27 \\ 
2-comp & 68$\cdot$83 & 18$\cdot$89 & 68$\cdot$83 & 10$\cdot$82 & 13$\cdot$69 & 19$\cdot$64 & 25$\cdot$35 & 57$\cdot$85 & 57$\cdot$48 \\ 
Gauss & 81$\cdot$72 & 11$\cdot$59 & 74$\cdot$11 & 9$\cdot$84 & 14$\cdot$53 & 19$\cdot$51 & 25$\cdot$26 & 82$\cdot$88 & 98$\cdot$48 \\ 
\\
$y$ & 2$\cdot$27 & 3$\cdot$74 & 6 & 7$\cdot$99 & 9$\cdot$66 & 11$\cdot$61 & 14$\cdot$93 & 20$\cdot$17 & 22 \\ 
nonpar & 1105$\cdot$51 & 171$\cdot$73 & 9$\cdot$21 & 20$\cdot$85 & 13$\cdot$87 & 5$\cdot$64 & 5$\cdot$01 & 33$\cdot$39 & 48$\cdot$50 \\ 
2-comp & 1110$\cdot$86 & 179$\cdot$99 & 7$\cdot$31 & 22$\cdot$93 & 15$\cdot$77 & 5$\cdot$26 & 4$\cdot$55 & 38$\cdot$11 & 51$\cdot$00 \\ 
Gauss & 1226$\cdot$72 & 202$\cdot$15 & 4$\cdot$57 & 23$\cdot$86 & 18$\cdot$44 & 6$\cdot$39 & 2$\cdot$53 & 36$\cdot$33 & 54$\cdot$61 \\ 
\end{tabular}
\caption{Relative errors ($\times100$) of the three estimators compared to the true densities at selected values for $y$ averaged over 10000 series of length 2000. `Gauss' stands for Gaussian state-dependent distributions, `2-comp' for two component Gaussian mixtures and `nonpar' for nonparametric Gaussian mixtures. }
\label{tab:relerr2000}
\end{table}
%
\begin{table}[t]
\centering
\begin{tabular}{rrrrrrrrrr}
$y$ & $-$15$\cdot$45 & $-$13$\cdot$77 & $-$11$\cdot$22 & $-$9$\cdot$05 & $-$7$\cdot$26 & $-$5$\cdot$3 & $-$2$\cdot$86 & $-$0$\cdot$21 & 1$\cdot$56 \\ 
nonpar & 71$\cdot$64 & 15$\cdot$52 & 3$\cdot$84 & 15$\cdot$11 & 22$\cdot$65 & 3$\cdot$32 & 44$\cdot$27 & 38$\cdot$79 & 39$\cdot$63 \\ 
 2-comp &97$\cdot$12 & 20$\cdot$84 & 2$\cdot$74 & 14$\cdot$30 & 25$\cdot$50 & 1$\cdot$97 & 46$\cdot$85 & 41$\cdot$65 & 53$\cdot$99 \\ 
 Gauss &131$\cdot$16 & 29$\cdot$32 & 3$\cdot$44 & 11$\cdot$02 & 24$\cdot$72 & 3$\cdot$92 & 43$\cdot$63 & 54$\cdot$49 & 36$\cdot$31 \\ 
 \\
    & $-$9$\cdot$36 & $-$6$\cdot$36 & $-$2$\cdot$71 & $-$0$\cdot$68 & 0$\cdot$5 & 1$\cdot$67 & 3$\cdot$71 & 7$\cdot$36 & 10$\cdot$36 \\ 
nonpar & 64$\cdot$72 & 12$\cdot$76 & 67$\cdot$33 & 9$\cdot$86 & 13$\cdot$44 & 20$\cdot$00 & 24$\cdot$80 & 51$\cdot$26 & 49$\cdot$53 \\ 
 2-comp &68$\cdot$63 & 13$\cdot$22 & 70$\cdot$28 & 10$\cdot$72 & 14$\cdot$22 & 20$\cdot$43 & 25$\cdot$70 & 54$\cdot$01 & 52$\cdot$37 \\ 
 Gauss &82$\cdot$78 & 7$\cdot$25 & 74$\cdot$02 & 10$\cdot$10 & 14$\cdot$28 & 19$\cdot$34 & 25$\cdot$53 & 83$\cdot$61 & 98$\cdot$67 \\ 
\\
    & 2$\cdot$27 & 3$\cdot$74 & 6 & 7$\cdot$99 & 9$\cdot$66 & 11$\cdot$61 & 14$\cdot$93 & 20$\cdot$17 & 22 \\ 
nonpar & 1111$\cdot$82 & 174$\cdot$24 & 8$\cdot$69 & 21$\cdot$15 & 13$\cdot$98 & 5$\cdot$27 & 3$\cdot$54 & 35$\cdot$19 & 51$\cdot$30 \\ 
 2-comp &1120$\cdot$81 & 183$\cdot$07 & 6$\cdot$64 & 22$\cdot$93 & 16$\cdot$05 & 4$\cdot$88 & 2$\cdot$99 & 38$\cdot$98 & 52$\cdot$27 \\ 
 Gauss &1222$\cdot$89 & 202$\cdot$18 & 4$\cdot$28 & 23$\cdot$57 & 18$\cdot$16 & 6$\cdot$26 & 1$\cdot$83 & 38$\cdot$10 & 56$\cdot$45 \\ 
\end{tabular}
\caption{Relative errors ($\times100$) of the three estimators compared to the true densities at selected values for $y$ averaged over 10000 series of length 5000. `Gauss' stands for Gaussian state-dependent distributions, `2-comp' for two component Gaussian mixtures and `nonpar' for nonparametric Gaussian mixtures. }
\label{tab:relerr5000}
\end{table}
%
\clearpage
\section{An additional simulation scenario with linearly dependent state-dependent distributions}
%
We consider an additional simulation scenario in which the state-dependent distributions are linearly dependent and, moreover, they differ not in location but rather in scale. 
%
Let $g_{\beta(a,b)}(x)= \{\Gamma(a+b)\}/\{\Gamma(a)\Gamma(b)\}x^{a-1}(1-x)^{b-1}\mathds{1}_{(0,1)}(x)$ denote the density of the Beta distribution, and let $g_{\beta(a,b)}(x; l, s) = g_{\beta(a,b)}\{(x-l)/s\}/s$ denote the Beta density translated by $l$ and scaled by $s$. 
%
Again we construct a three-state hidden Markov model with transition probability matrix 
\begin{equation*}
\Gamma=
\begin{pmatrix}
\text{0$\cdot$5} & \text{0$\cdot$25} & \text{0$\cdot$25}\\
\text{0$\cdot$4} & \text{0$\cdot$4} & \text{0$\cdot$2}\\
\text{0$\cdot$2} & \text{0$\cdot$2} & \text{0$\cdot$6}
\end{pmatrix}
\end{equation*}
%
and let the location parameter $\mu$ for the state-dependent mixtures of the first and the second state follow a Beta distribution $g_{\beta(2,11)}(\mu;-3,20)$, while the scale parameter $\sigma$ is uniformly distributed on the interval $(0$$\cdot$$9,1$$\cdot$$5)$ in the first state and uniformly distributed on the interval $(4,6)$ in the second state. The density in the third state is a linear combination of those of the first and second states: $f_{3,0}(y)=0$$\cdot$$4 f_{1,0}(y)+ 0$$\cdot$$6 f_{2,0}(y)$. 
%
\begin{figure}[h]
\centering
\includegraphics[width=0.4\textwidth]{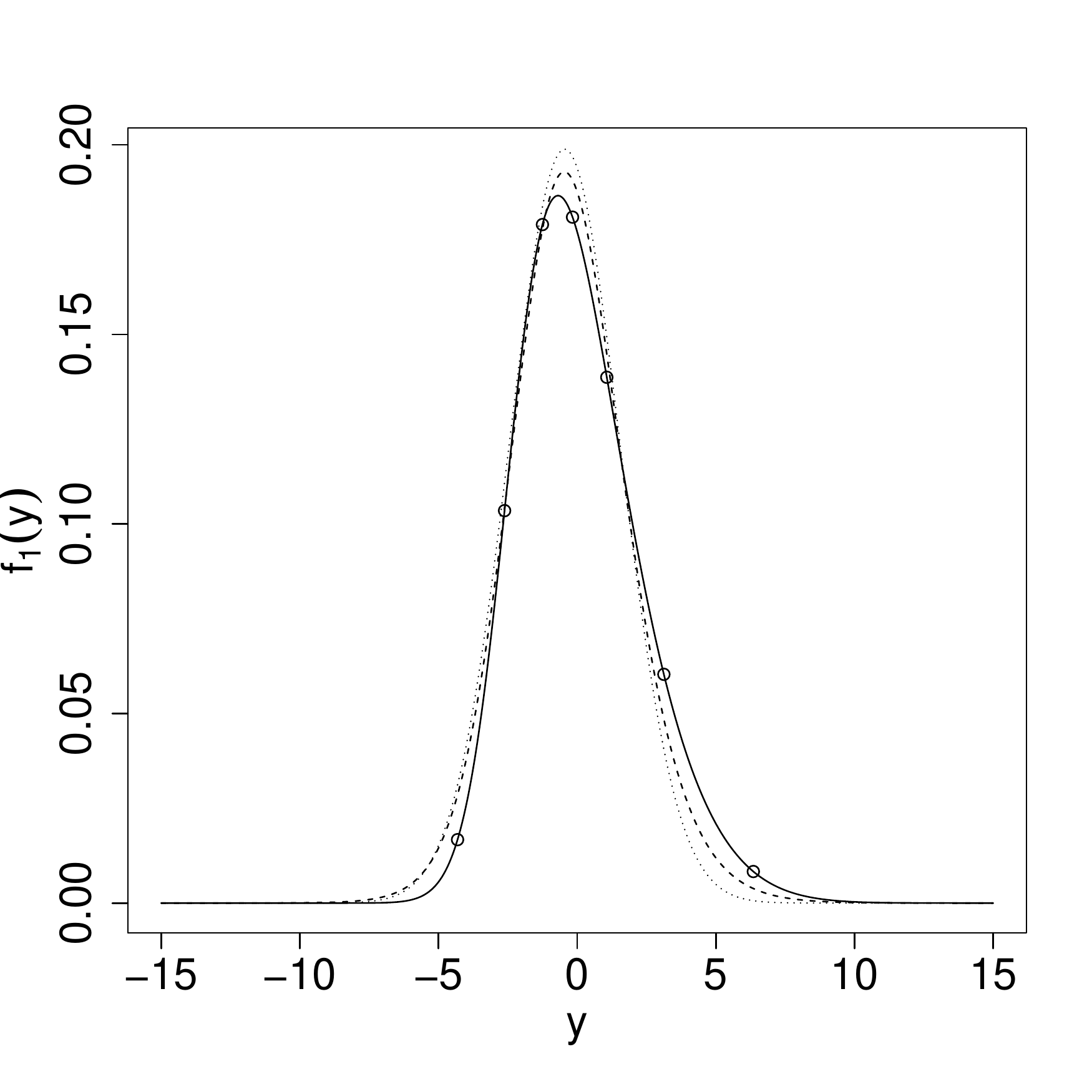}
\includegraphics[width=0.4\textwidth]{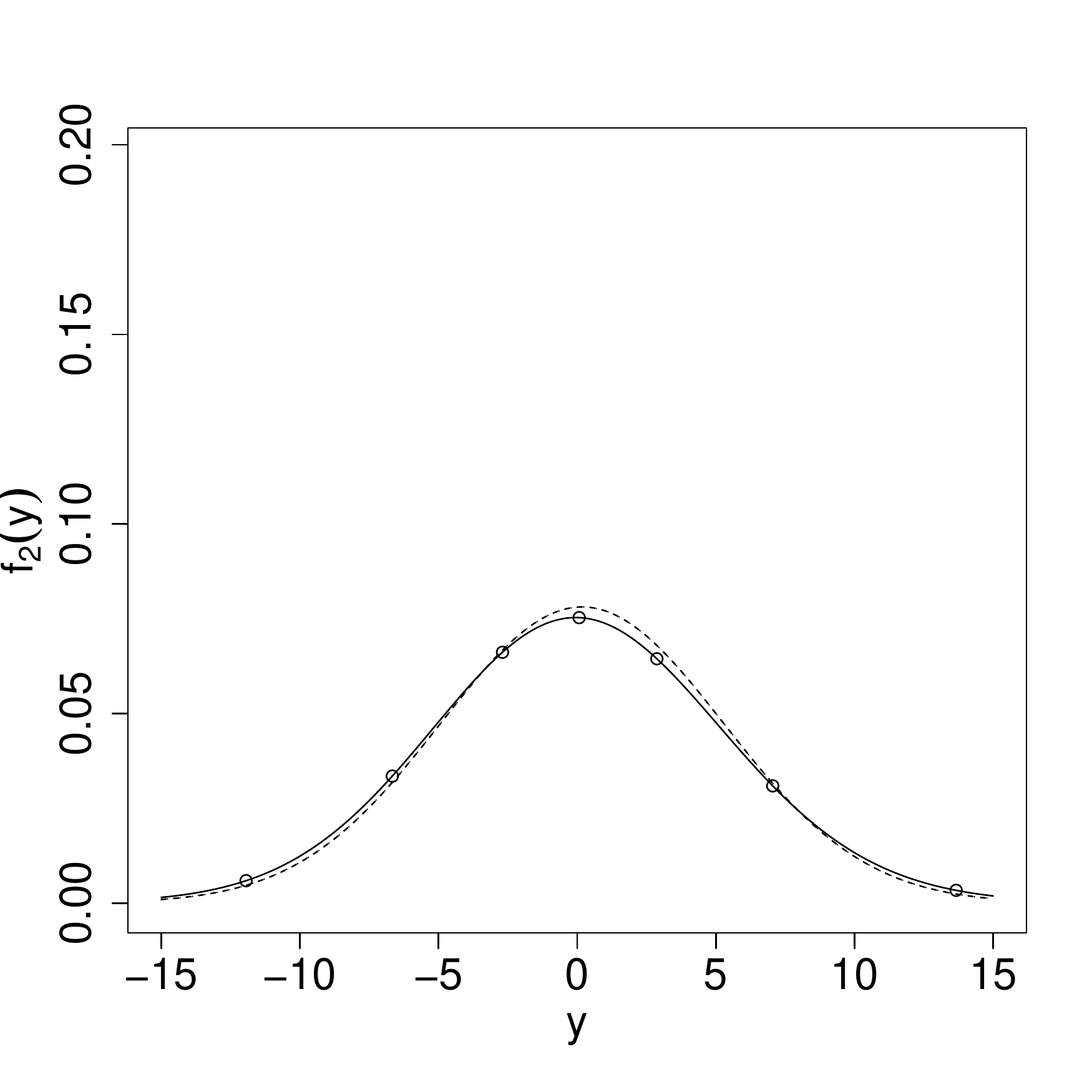}
\includegraphics[width=0.4\textwidth]{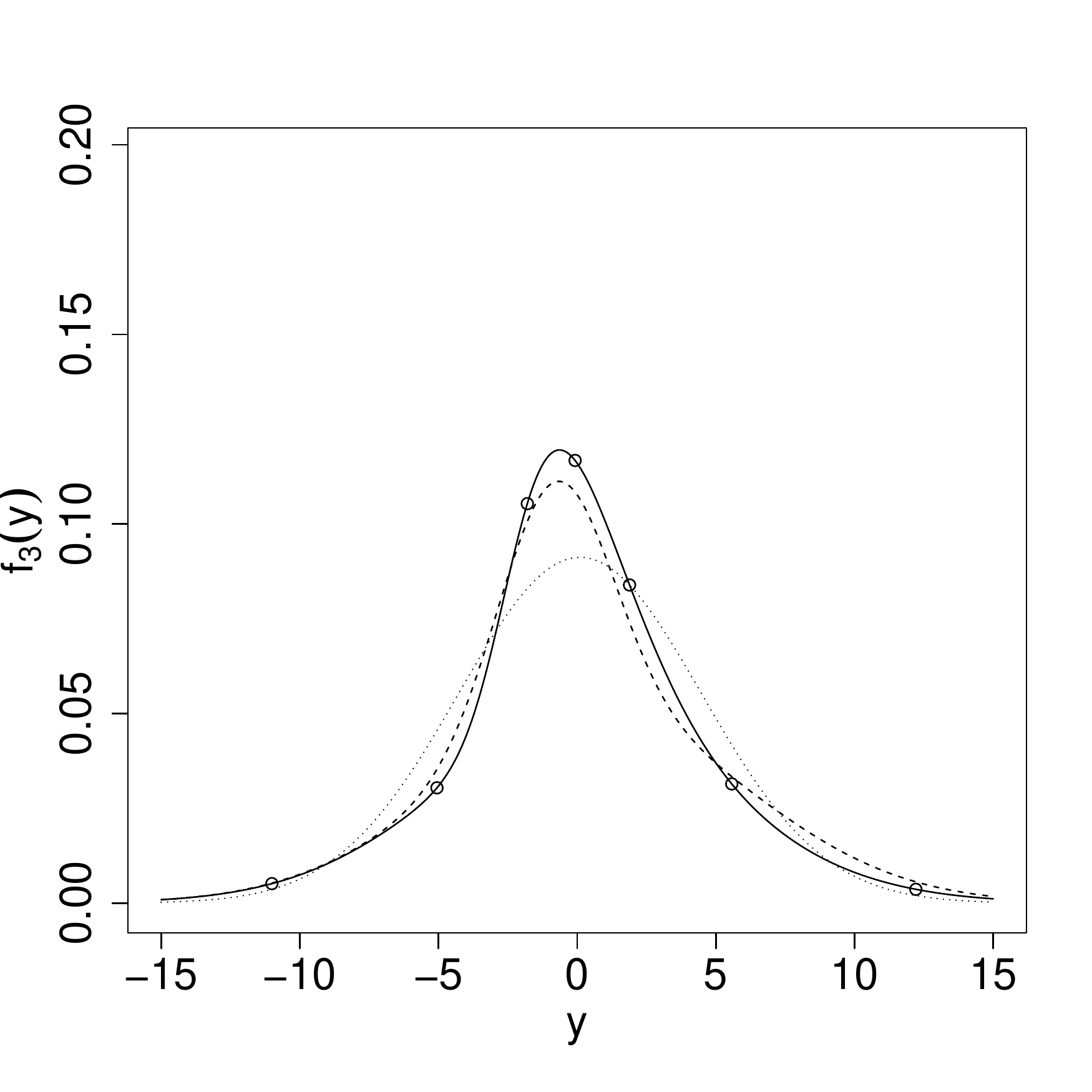}
\includegraphics[width=0.4\textwidth]{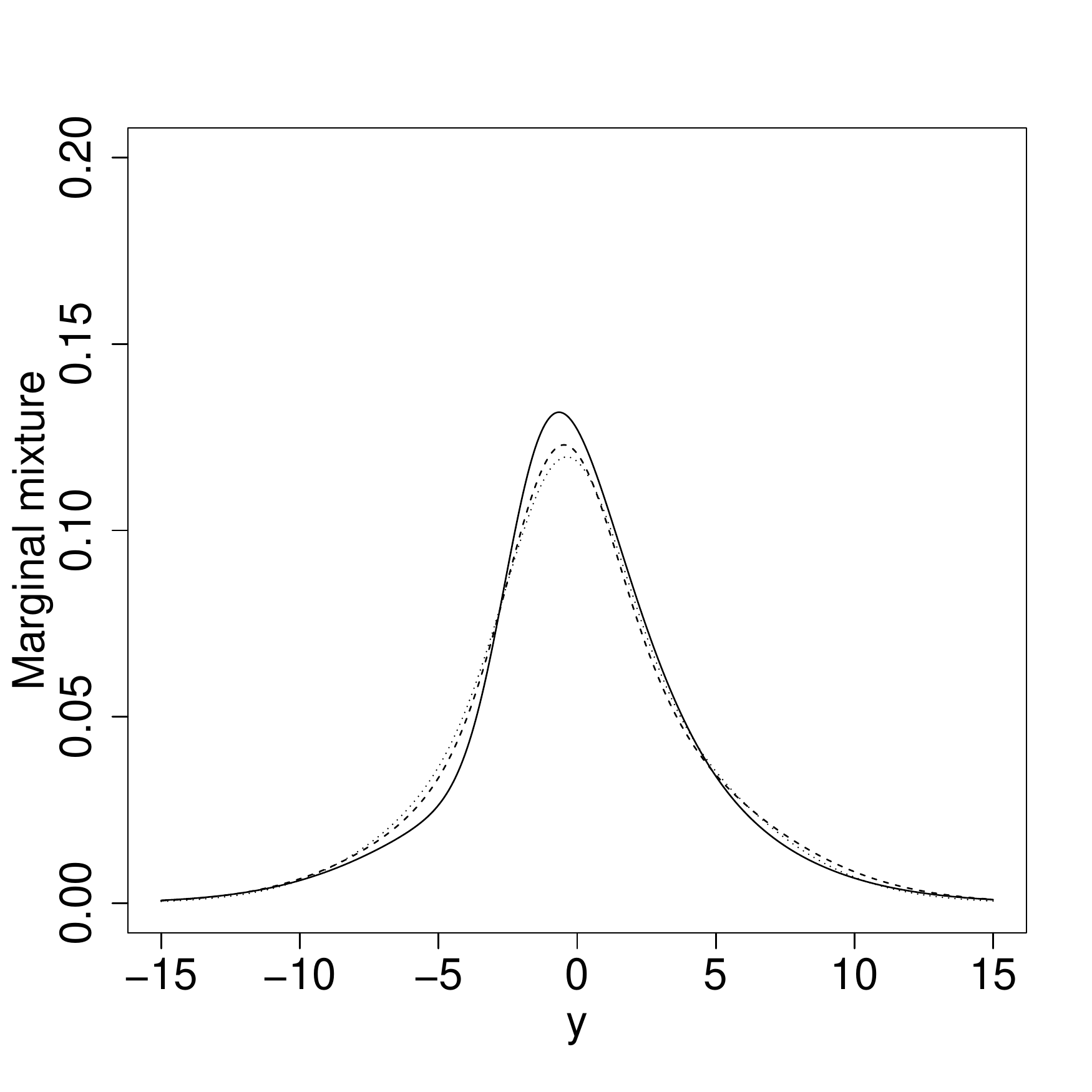}
\caption{State dependent densities and marginal density of the hidden Markov model. Solid line: true densities, dashed line: nonparametric maximum likelihood estimator, dotted line: Gaussian maximum likelihood estimate. }
\label{fig:estimates}
\end{figure}
%
We only compare the nonparametric and a parametric Gaussian maximum likelihood estimator. 
In Fig.~\ref{fig:estimates} we plot estimates of the state-dependent densities and the marginal mixture of the hidden Markov model when using the stationary distributions of the estimated transition probability matrices and the estimated state-dependent densities. In the first state where the density is slightly skew, we observe that the nonparametric maximum likelihood estimator performs better than the parametric estimator, whereas in the second state, where due to the large scale parameters the density is close to being symmetric, the estimators yield similar results. In the third state, the advantage of the nonparametric estimator is obvious, especially in tracing the left tail and the peak of the density. 
%
\begin{table}[ht]
\centering
\begin{tabular}{rrrrrrrr}
1st state \\
$y$ & $-$4$\cdot$31 & $-$2$\cdot$62 & $-$1$\cdot$25 & $-$0$\cdot$17 & 1$\cdot$07 & 3$\cdot$12 & 6$\cdot$35 \\ 
nonparametric & 22$\cdot$87 & 12$\cdot$85 & 7$\cdot$88 & 18$\cdot$34 & 15$\cdot$61 & 27$\cdot$22 & 72$\cdot$45 \\ 
parametric & 27$\cdot$84 & 10$\cdot$48 & 6$\cdot$69 & 19$\cdot$28 & 20$\cdot$16 & 32$\cdot$96 & 94$\cdot$34 \\ 
\\
 2nd state\\
$y$   & $-$11$\cdot$94 & $-$6$\cdot$67 & $-$2$\cdot$69 & 0$\cdot$07 & 2$\cdot$87 & 7$\cdot$05 & 13$\cdot$66 \\ 
nonparametric & 20$\cdot$61 & 9$\cdot$25 & 4$\cdot$77 & 8$\cdot$12 & 6$\cdot$59 & 7$\cdot$00 & 40$\cdot$76 \\ 
parametric & 21$\cdot$43 & 4$\cdot$92 & 2$\cdot$78 & 4$\cdot$78 & 5$\cdot$38 & 3$\cdot$98 & 33$\cdot$89 \\ 
\\
 3rd state\\
$y$ & $-$11$\cdot$01 & $-$5$\cdot$06 & $-$1$\cdot$8 & $-$0$\cdot$08 & 1$\cdot$89 & 5$\cdot$57 & 12$\cdot$21 \\ 
nonparametric & 22$\cdot$97 & 37$\cdot$08 & 15$\cdot$74 & 15$\cdot$93 & 5$\cdot$40 & 22$\cdot$23 & 41$\cdot$73 \\ 
parametric & 31$\cdot$77 & 49$\cdot$92 & 20$\cdot$36 & 21$\cdot$09 & 2$\cdot$20 & 30$\cdot$80 & 49$\cdot$30 \\ 
\end{tabular}
\caption{Relative errors ($\times100$) of the two estimators compared to the true densities at selected values for $y$ averaged over 10000 replications.  `Gauss' stands for Gaussian state-dependent distributions, `2-comp' for two component Gaussian mixtures and `nonpar' for nonparametric Gaussian mixtures.}
\label{tab:esterror}
\end{table}
%

For the points plotted in Fig.~\ref{fig:estimates} we evaluate the relative errors of the estimators and provide the results averaged over 10000 replications in Table \ref{tab:esterror}. We observe that for the first state, except for two points, the nonparametric estimator yields better results than the parametric estimator. In the second state the parametric estimator yields somewhat better results when estimating the nearly symmetric density. For the third state the nonparametric estimator yields substantially better results than the parametric estimator. 

The absolute errors for the estimates of the transition probabilities are reported in Table \ref{tab:tpm2}. The nonparametric estimator yields slightly better results in the second and third state. 
%
\begin{table}[h]
\begin{tabular}{cccc}
 & $K^{-1}\sum_{k=1}^K |\hat{\alpha}_{j,k}-\alpha_{j,k}|$ &  $K^{-1}\sum_{k=1}^K |\tilde{\alpha}_{j,k}-\alpha_{j,k}|$ \\ 
State $j=1$ &  11$\cdot$93 & 11$\cdot$71\\
State $j=2$ &9$\cdot$65   & 9$\cdot$93  \\
State $j=3$ & 4$\cdot$52 & 5$\cdot$34
\end{tabular} 
\caption{Absolute errors of estimated transition probabilities ($\times$100) averaged over 10000 simulations. Nonparametric estimator ($\hat{\alpha}_{j,k}$) and parametric estimator ($\tilde{\alpha}_{j,k}$) $(j,k=1,\hdots,K)$. }
\label{tab:tpm2}
\end{table}


%

\bibliographystyle{ims}
\bibliography{db}
%
%
%